\newdimen\proofrulebreadth \proofrulebreadth=.05em
\newdimen\proofdotseparation \proofdotseparation=1.25ex
\newdimen\proofrulebaseline \proofrulebaseline=2ex
\let\then\relax
\def\hfi{\hskip0pt plus.0001fil}
\mathchardef\squigto="3A3B
\newif\ifinsideprooftree\insideprooftreefalse
\newif\ifonleftofproofrule\onleftofproofrulefalse
\newif\ifproofdots\proofdotsfalse
\newif\ifdoubleproof\doubleprooffalse
\let\wereinproofbit\relax
\newdimen\shortenproofleft
\newdimen\shortenproofright
\newdimen\proofbelowshift
\newbox\proofabove
\newbox\proofbelow
\newbox\proofrulename
\def\shiftproofbelow{\let\next\relax\afterassignment\setshiftproofbelow\dimen0 }
\def\shiftproofbelowneg{\def\next{\multiply\dimen0 by-1 }%
\afterassignment\setshiftproofbelow\dimen0 }
\def\setshiftproofbelow{\next\proofbelowshift=\dimen0 }
\def\setproofrulebreadth{\proofrulebreadth}
\def\prooftree{
%
\ifnum	\lastpenalty=1
\then	\unpenalty
\else	\onleftofproofrulefalse
\fi
%
\ifonleftofproofrule
\else	\ifinsideprooftree
	\then	\hskip.5em plus1fil
	\fi
\fi
%
\bgroup
\setbox\proofbelow=\hbox{}\setbox\proofrulename=\hbox{}%
\let\justifies\proofover\let\leadsto\proofoverdots\let\Justifies\proofoverdbl
\let\using\proofusing\let\[\prooftree
\ifinsideprooftree\let\]\endprooftree\fi
\proofdotsfalse\doubleprooffalse
\let\thickness\setproofrulebreadth
\let\shiftright\shiftproofbelow \let\shift\shiftproofbelow
\let\shiftleft\shiftproofbelowneg
\let\ifwasinsideprooftree\ifinsideprooftree
\insideprooftreetrue
%
\setbox\proofabove=\hbox\bgroup$\displaystyle 
\let\wereinproofbit\prooftree
%
\shortenproofleft=0pt \shortenproofright=0pt \proofbelowshift=0pt
%
\onleftofproofruletrue\penalty1
}
\def\eproofbit{
%
\ifx	\wereinproofbit\prooftree
\then	\ifcase	\lastpenalty
	\then	\shortenproofright=0pt	
	\or	\unpenalty\hfil		
	\or	\unpenalty\unskip	
	\else	\shortenproofright=0pt	
	\fi
\fi
%
\global\dimen0=\shortenproofleft
\global\dimen1=\shortenproofright
\global\dimen2=\proofrulebreadth
\global\dimen3=\proofbelowshift
\global\dimen4=\proofdotseparation
\global\count10=\proofdotnumber
%
$\egroup  
%
\shortenproofleft=\dimen0
\shortenproofright=\dimen1
\proofrulebreadth=\dimen2
\proofbelowshift=\dimen3
\proofdotseparation=\dimen4
\proofdotnumber=\count10
}
\def\proofover{
\eproofbit 
\setbox\proofbelow=\hbox\bgroup 
\let\wereinproofbit\proofover
$\displaystyle
}%
\def\proofoverdbl{
\eproofbit 
\doubleprooftrue
\setbox\proofbelow=\hbox\bgroup 
\let\wereinproofbit\proofoverdbl
$\displaystyle
}%
\def\proofoverdots{
\eproofbit 
\proofdotstrue
\setbox\proofbelow=\hbox\bgroup 
\let\wereinproofbit\proofoverdots
$\displaystyle
}%
\def\proofusing{
\eproofbit 
\setbox\proofrulename=\hbox\bgroup 
\let\wereinproofbit\proofusing
\kern0.3em$
}
\def\endprooftree{
\eproofbit 
  \dimen5 =0pt
%
\dimen0=\wd\proofabove \advance\dimen0-\shortenproofleft
\advance\dimen0-\shortenproofright
%
\dimen1=.5\dimen0 \advance\dimen1-.5\wd\proofbelow
\dimen4=\dimen1
\advance\dimen1\proofbelowshift \advance\dimen4-\proofbelowshift
%
\ifdim	\dimen1<0pt
\then	\advance\shortenproofleft\dimen1
	\advance\dimen0-\dimen1
	\dimen1=0pt
	\ifdim  \shortenproofleft<0pt
        \then   \setbox\proofabove=\hbox{%
			\kern-\shortenproofleft\unhbox\proofabove}%
                \shortenproofleft=0pt
        \fi
\fi
%
\ifdim	\dimen4<0pt
\then	\advance\shortenproofright\dimen4
	\advance\dimen0-\dimen4
	\dimen4=0pt
\fi
%
\ifdim	\shortenproofright<\wd\proofrulename
\then	\shortenproofright=\wd\proofrulename
\fi
%
\dimen2=\shortenproofleft \advance\dimen2 by\dimen1
\dimen3=\shortenproofright\advance\dimen3 by\dimen4
%
\ifproofdots
\then
	\dimen6=\shortenproofleft \advance\dimen6 .5\dimen0
	\setbox1=\vbox to\proofdotseparation{\vss\hbox{$\cdot$}\vss}%
	\setbox0=\hbox{%
		\advance\dimen6-.5\wd1
		\kern\dimen6
		$\vcenter to\proofdotnumber\proofdotseparation
			{\leaders\box1\vfill}$%
		\unhbox\proofrulename}%
\else	\dimen6=\fontdimen22\the\textfont2 
	\dimen7=\dimen6
	\advance\dimen6by.5\proofrulebreadth
	\advance\dimen7by-.5\proofrulebreadth
	\setbox0=\hbox{%
		\kern\shortenproofleft
		\ifdoubleproof
		\then	\hbox to\dimen0{%
			$\mathsurround0pt\mathord=\mkern-6mu%
			\cleaders\hbox{$\mkern-2mu=\mkern-2mu$}\hfill
			\mkern-6mu\mathord=$}%
		\else	\vrule height\dimen6 depth-\dimen7 width\dimen0
		\fi
		\unhbox\proofrulename}%
	\ht0=\dimen6 \dp0=-\dimen7
\fi
%
\let\doll\relax
\ifwasinsideprooftree
\then	\let\VBOX\vbox
\else	\ifmmode\else$\let\doll=$\fi
	\let\VBOX\vcenter
\fi
\VBOX	{\baselineskip\proofrulebaseline \lineskip.2ex
	\expandafter\lineskiplimit\ifproofdots0ex\else-0.6ex\fi
	\hbox	spread\dimen5	{\hfi\unhbox\proofabove\hfi}%
	\hbox{\box0}%
	\hbox	{\kern\dimen2 \box\proofbelow}}\doll%
%
\global\dimen2=\dimen2
\global\dimen3=\dimen3
\egroup 
\ifonleftofproofrule
\then	\shortenproofleft=\dimen2
\fi
\shortenproofright=\dimen3
%
\onleftofproofrulefalse
\ifinsideprooftree
\then	\hskip.5em plus 1fil \penalty2
\fi
}
\newcommand\seq\vdash 
\newcommand\tms\otimes
\newcommand\ra\rightarrow
\newcommand\Ra\Rightarrow
\newcommand\llts\otimes
\newcommand\pt\bullet 
\newcommand\lts\bullet
 \newcommand\dw\downarrow
\newtheorem{defi}{ \bf Definition}
\newtheorem{prop}{{\bf Proposition}}
\newtheorem{lemme}{{\bf Lemma}}
\newtheorem{rem}{{\bf Remark}}
\newcommand{\bdm}{\begin{displaymath}}
\newcommand{\edm}{\end{displaymath}}
\newcommand{\de}{\mathfrak}
\newcommand{\da}{\scriptstyle\maltese}
\newcommand{\N}{ \mathbb{N}at}
\newcommand{\view}[1]{\raisebox{.3ex}{$\ulcorner$}{#1}\raisebox{.3ex}{$\urcorner$}}
\newcommand{\Ln}{\mathbb{L}_{n}}
\newcommand{\la}{\lambda}
\newcommand{\A}{\mathbb{A}}
\newcommand{\B}{\mathbb{B}}
\newcommand{\R}{\Rightarrow}
\newcommand{\Sig}{(\Sigma x \in \A) \B(x)}
\newcommand{\fr}{\A \R \B}
\newlength{\Widetildeheight}
\newlength{\Widetildewidth}
\newenvironment{varitemize}
{
\begin{list}{\labelitemi}
{\setlength{\itemsep}{0pt}
 \setlength{\topsep}{0pt}
 \setlength{\parsep}{0pt}
 \setlength{\partopsep}{0pt}
 \setlength{\leftmargin}{15pt}
 \setlength{\rightmargin}{0pt}
 \setlength{\itemindent}{0pt}
 \setlength{\labelsep}{5pt}
 \setlength{\labelwidth}{10pt}
}}
{
 \end{list}
}
\begin{document}
     \title{Type Theory in Ludics}
    \author{E.Sironi}
     
\institute{Aix Marseille Universit\'e, CNRS, Centrale Marseille, I2M, \\
UMR 7373, 13453 Marseille, France
\\eugenia.sironi@univ-mrs.fr}
  \maketitle

\begin{abstract}    
We present some first steps in the more general setting of the interpretation of dependent type theory in Ludics. The framework is the following: a (Martin-L\"of) type A is represented by a behaviour (which corresponds to a formula) in such a way that canonical elements of A are interpreted in a set that is principal for the behaviour, where principal means in some way a minimal generator. We introduce some notions on Ludics and the interpretation of Martin-L\"of rules. Then we propose a representation for simple types in Ludics, i.e., natural numbers, lists, the arrow construction and the usual constructors.
\end{abstract}

     \section{Introduction}
The aim of this paper is to present some first steps in the more general setting of the interpretation of dependent type theory in Ludics.\\
Dependent type theories started in the early 1970's, when Martin-L\"of introduced his intuitionistic theory of types \cite{MartinLF}. Types have been studied from the initial motivation to improve the paradoxical structure of sets, and were found to be much closer to the notion of computation, thanks to the Curry-Howard isomorphism. This latter is a one-to-one correspondence between logical systems and type systems such that propositions are mapped to types and proofs of a proposition are mapped to terms of the type (which is the image of the proposition).
\\Ludics is a theory introduced by Girard in \cite{LocSol} and comes from a fine analysis of the multiplicative, additive fragment of Linear Logic (MALL). The aim of Ludics is to overcome the distinction between syntax and semantics. These two worlds usually distinct become a unique universe, where an object is completely determined by the objects it interacts with. This means that properties are expressed and tested internally (and interactively), because the objects themselves test each other. The main object of Ludics is called \textbf{design} and represents a cut-free (para)-proof\footnotemark\footnotetext{Not a proof because there is a special rule, the daimon, which ends the interaction.} of a certain formula where everything is erased, but locations. With the word location we mean the ``place'' occupied by a subformula of a formula. The notion of location is based on an intuition given by computer science: proofs do not manipulate the ``idea'' of a formula, but the address in the memory where it is stored: its location \cite{FaggianTesi}. A design may be read as a representation of a formula through the addresses of its subformulae (and recursively). Designs represent both an abstraction of a formal proof and its semantic interpretation, therefore syntax and semantics meet in this notion. Ludics may also be represented in the spirit of $\la$-calculus: a design is a linear $\la$-term with ``superimposed abstractions'', interaction is similar to $\beta$-reduction \cite{TeruiComp}.
\\ A way to present Ludics is to see it as a form of game semantics, where designs can be seen as a linear version of innocent strategies \cite{FaggianH02}, \cite{CurFag}. However there are two main differences between Ludics and game semantics. In Ludics, the notion of interaction, i.e., normalization, comes at first from which we define interactive type\footnotemark\footnotetext{This notion is introduced by Terui in \cite{TeruiComp}.} (called \textbf{behaviour}): a set of designs which behave well with respect to interaction. Second, designs are a priori untyped: their type depends on the behaviour in which we consider them, indeed designs are characterized by the set of the other designs they well interact with. This feature of Ludics corresponds to the internal notion of proof which is one of the principal features of type theory. In Martin-L\"of type theory a type is characterized by the set of its terms, and in Ludics a behaviour (which corresponds to a formula) is characterized by a particular subset of its designs, i.e., the designs that generate it (called \textbf{material}). In \cite{MartinLF} Martin-L\"of introduces a constructive set theory. Ludics is even more constructive than \cite{MartinLF}, indeed designs are explicitly constructed. So Ludics looks like a good candidate to give a concrete model of type theory, in particular we show in this paper that Ludics can be a model for dependent types (types that depend on a value). At the same time the discussion of first order in Ludics is still an open question, so a representation of dependent types in Ludics could help to investigate about this subject. The framework is the following: a (Martin-L\"of) type $A$ is represented by a behaviour $\textbf{A}$ in such a way that canonical elements of $A$ are interpreted in a set $\mathbb{A}$ that is principal for $\textbf{A}$, where principal (defined below) means in some way a minimal generator.\\
In section \ref{MLLud} we introduce some notions on Ludics and Martin-L\"of Type Theory. In section \ref{SympleTyp} we propose a representation for simple types in Ludics, i.e., natural numbers, lists, the arrow construction $\to$ for them. In section \ref{DepTyp} we  propose a representation for constructors $\Pi$ and $\Sigma$ and an example of dependent type.\\Being Ludics affine\footnotemark\footnotetext{The argument of a function has to be used at most once. We have weakening but we do not have contraction.} we are not able to represent some basic operations on simple types, say the square function on $\mathbb{N}$. However there exist extensions of Ludics that integrate exponentials \cite{Ludicswithrep}, on which our approach may be applied.

\section{Interpreting Martin-L\"of's Type Theory in Ludics}\label{MLLud}

\subsection{Martin-L\"of's Type Theory}
Martin-L\"of defines a constructive set theory, where no external notion on sets has to be assumed. Types are defined by judgements, giving their meaning. Judgements are of the form $\Gamma \vdash A$ type, $\Gamma \vdash x=y : A$, $\Gamma \vdash t: A$, where $t$ is a term of type $A$ w.r.t. a context $\Gamma$. Four kinds of rules help to construct judgements: the \textbf{formation} rule which explains how to form a new type (eventually using other types already defined), the \textbf{introduction} rule which explains what is a canonical element of a given type, the \textbf{elimination} rule, i.e., how to define functions over the types defined with the introduction rules, the  \textbf{equality} rule, i.e., how to compute the functions defined by elimination over the canonical elements of a type defined with the introduction rule.

\begin{example} \label{esprod}
Given a type $A$ and a family of types $B(x)$ over $A$ we recall the rules for the type $(\Pi x\in A)B(x)$ defined in \cite{MartinLF} as follows:\\
-- $\Pi$-Formation:
\begin{minipage}{.46\textwidth}
$\shortstack{ $\Gamma \vdash A$ type  \hspace{1em} \shortstack{$\Gamma, x : A \vdash B(x)$ type} \\\hrulefill\\ $\Gamma \vdash  (\Pi x\in A)B(x)$ type}$ 
\end{minipage}
\begin{minipage}{.33\textwidth}
which states that the product of a family of types is a type.
\end{minipage}
\vspace{.1mm}
\\%
-- $\Pi$-Introduction: 
\begin{minipage}{.39\textwidth}
$\shortstack{$\Gamma, x : A \vdash b(x) : B(x)$ \\\hrulefill\\ $ \Gamma \vdash (\lambda x) b(x) : (\Pi x \in A)B(x)$}$
\end{minipage}
\begin{minipage}{.38\textwidth}
 which states that a canonical element of the product of a family of types is an abstraction $(\lambda x) b(x)$.
\end{minipage}
-- $\Pi$-Equality:
\\
\centerline{
\begin{tabular}{ccc}
$\shortstack{$\Gamma \vdash a : A$ \hspace{1em} \shortstack{ $\Gamma, x : A \vdash b(x) : B(x)$}\\\hrulefill\\ $\Gamma \vdash Ap((\lambda x)b(x),a)= b(a): B(a)$}$
&~~~~&
$\shortstack{$\Gamma \vdash c : (\Pi x\in A) B(x)$ \\\hrulefill\\ $\Gamma \vdash c=(\lambda x) Ap (c,x): (\Pi x\in A) B(x)$}$
\end{tabular}
}
\\
The first equality rule shows how the function $Ap$ operates on canonical elements of the product. The second equality rule says that $c$ and $(\lambda x)Ap(c,x)$ are equal: they yield the same canonical element of $(\Pi x \in A)B(x)$.
\end{example}

\subsection{A few Words about Ludics}
In this section we introduce some notions of Ludics, we refer the reader to \cite{LocSol} for a formal thorough presentation. Ludics is a theory introduced by Girard \cite{LocSol} to reconstruct logic starting from the notion of interaction. The central object is no more truth or proof, but interaction defined on designs. To define designs we first introduce the notions of address, action and chronicle. 

\begin{defi}
 An \textbf{address}\footnotemark\footnotetext{The addresses are denoted by greek letters: $\gamma, \xi,...$}  $\xi$ is a finite (maybe empty) sequence of integers.
\\An \textbf{action} $\kappa$ is 

\begin{varitemize}

\item either a positive proper action $(+, \xi, I)$ or a negative proper action $(-, \xi, I)$ where $\xi$ is called the \textbf{address} of $\kappa$ and the finite set of integers $I$ is said its \textbf{ramification}. 

\item or the positive (non proper) action \textbf{daimon} denoted by $\da$. 
\end{varitemize}
\end{defi}

\begin{defi}
A \textbf{chronicle} $\de{c}$ is a non empty, finite alternate sequence of actions s.t. (1) Each action of $\de{c}$ is either initial\footnotemark\footnotetext{First action of the chronicle if the action is negative.} or justified\footnotemark\footnotetext{An action $\kappa$ is justified by the action $\kappa'$ when the address of $\kappa$ is built from the address of $\kappa '$. For instance $(+, \xi.0.2, \{ 0\})$ is justified by $(-, \xi.0, \{2 \})$ and $(-, \alpha. 3, \{ 2\})$ is justified by $(+, \alpha, \{ 3\})$.} by a previous action of opposite polarity. In particular negative actions are justified by the immediately precedent positive action. 
(2) Actions have distinct addresses. 
(3) If present, a daimon is the last action of the chronicle. 
\end{defi}

\begin{defi}
 Two chronicles $\de{c}_{1}$ and $\de{c}_{2}$ are \textbf{coherent}, noted $\de{c}_1 \coh \de{c}_{2}$, when the two following conditions are satisfied: 

\begin{varitemize}

\item Comparability: Either one extends the other or they first differ on negative actions, i.e., if $w\kappa_{1} \coh w\kappa_{2}$ then either $\kappa_{1}=\kappa_{2}$ or $\kappa_{1}$ and $\kappa_{2}$ are negative actions. 

\item Propagation: When they first differ on negative actions and these negative actions have distinct addresses then the addresses of following actions in $\de{c}_{1}$ and $\de{c}_{2}$ are pairwise distinct, i.e., if  $w( -, \xi_{1} , I_{1} )w_{1} \kappa_{1} \coh  w(-, \xi_{2} , I_{2} )w_{2} \kappa_{2}$ with $\xi_{1} \neq \xi_{2}$ then $\kappa_{1}$ and $\kappa_{2}$ have distinct addresses. 
\end{varitemize}
\end{defi}
We consider chronicles based on a sequent $\Gamma \vdash \Delta$, where $\Delta$ is a finite set of addresses, $\Gamma$ contains at most one address and the addresses of $\Gamma \cup \Delta$ are pairwise disjoint, i.e., no address is a subaddress\footnotemark\footnotetext{$\xi$ is a subaddress of $\alpha$ when $\xi$ is a prefix of $\alpha$. For instance $1.0.2$ is a subaddress of $1.0.2.4.0$.} of another one. $\Delta$ contains the addresses of the initial positive actions of the chronicle. If $\Gamma$ is empty the \textbf{base} is said \textbf{positive}, otherwise the base is said \textbf{negative} and $\Gamma$ contains the address of the initial negative action.

\begin{defi} 
A \textbf{design} $\de{D}$, based on $\Gamma \vdash \Delta$, is a set of chronicles based on $\Gamma \vdash \Delta$ s.t. the following conditions are satisfied:

\begin{varitemize}

\item \emph{Forest}: The set is prefix closed. 

\item \emph{Coherence}: The set is a clique with respect to $\coh$.

\item \emph{Positivity}: A chronicle without extension in $\de{D}$ (also said \textbf{maximal}) ends with a positive action.

\item \emph{Totality}: $\de{D}$ is non empty when the base is positive, in that case all the chronicles begin with a (unique) positive action. 

\end{varitemize} 

\end{defi}
We say that a design is \textbf{positive}, when its base $\Gamma \vdash \Delta$ is positive ($\Gamma$ is empty), otherwise it is said \textbf{negative}. 
\\A design can also be represented as a proof-like sequent structure. 
\begin{example}\label{des}
In the left the design $\de{D}$ is represented as a tree-like structure of actions, while in the right as a proof-like sequent structure.
\begin{center}
\begin{minipage}{.48\textwidth}
\scalebox{.9}{
\begin{tikzpicture}[x=70pt,y=30pt]
\node at (0,0) {$(+,\xi,\{1,3\})$};
\node at (-1,.8) {$(-,\xi.3,\{0\})$};
\node at (-1,1.6) {$(+,\xi.3.0,\emptyset)$};
\node at (0,.8) {$(-,\xi.1,\{0\})$};
\node at (0,1.6) {$(+,\xi.1.0,\{0\})$};
\node at (1,.8) {$(-,\xi.1,\{1\})$};
\node at (1,1.6) {$(+,\xi.1.1,\{0\})$};

\draw (0,.3) -- (0,.6);
\draw (0,.3) -- (-1,.6);
\draw (0,.3) -- (1,.6);
\draw (-1,1) -- (-1,1.3);
\draw (0,1) -- (0,1.3);
\draw (1,1) -- (1,1.3);
\end{tikzpicture}
}
\end{minipage}
\begin{minipage}{.48\textwidth}
\begin{center}
\scalebox{.9}{
\infer{\vdash \xi}
	{
	\infer{\xi.3 \vdash}
		{
		\infer[\emptyset]{\vdash \xi.3.0}
			{}
		}
	&
	\infer{\xi1 \vdash}
		{
		\infer{\vdash \xi.1.0}
			{
			\xi1.0.0 \vdash
			}
		&
		\infer{\vdash \xi.1.1}
			{
			\xi.1.1.0 \vdash
			}
		}
	}

}
\end{center}
\end{minipage}
\end{center}

\end{example}
When we consider a design as a set of chronicles, we abusively only write maximal chronicles. For instance, in Example \ref{des}, $\de{D} =\{ (+, \xi, \{  1,3 \}) (-, \xi.3, \{ 0\}) (+, \xi.3.0, \emptyset),$ 
$ (+, \xi, \{  1,3\} ) (-, \xi.1, \{ 0\}) (+, \xi.1.0, \{ 0\} ),  (+, \xi, \{  1,3\} ) (-, \xi.1, \{ 1 \}) (+, \xi.1.1, \{ 0\} ) \}$. 
\\When we consider a design as a proof-like sequent structure we associate a \textbf{positive rule} to each positive action and a \textbf{negative rule} to all negative actions with the same address. For instance we associate the rule $(-, \xi.1, \{  \{0 \}, \{ 1\} \})$ to $(-, \xi.1,  \{0 \})$ and $(-, \xi.1,  \{1 \})$ and write it as \scalebox{.8}{$\shortstack{  \shortstack{$\vdash \xi.1.0$} \hspace{1em}  \shortstack{$\vdash \xi.1.1$}  \\\hrulefill\\ $\xi.1 \vdash $}$}. Given a negative base $ \sigma   \vdash \Gamma$,  the rule \scalebox{.8}{$\shortstack{   $\vdash \Gamma$  \\\hrulefill\\ $ \sigma   \vdash \Gamma$}$} represents $(-, \sigma, \emptyset)$. 

\begin{example}\label{dai}
 The design whose only action is $\da$ is called $\de{Dai}$.
 \\ $\de{Dai}=\{\da$$\}$ \hspace{2em} $ \de{Dai} \quad = \shortstack{$\hrulefill_{\da}$ \\ $\vdash \xi.1,...,\xi. n$}$.

 \end{example} 
The \textbf{base} of a design is its first sequent in the bottom-up view, i.e., its root, when we consider it like a tree. For instance in Example \ref{des} the base is $\vdash \xi$, while in Example \ref{dai} the base is $\vdash \xi.1,...,\xi.n$.

\begin{defi}
 A \textbf{cut} is an address which appears in the base of two designs with opposite polarity (on the left and on the right of $\vdash$).
 \\ A \textbf{net} is a finite set of designs. A \textbf{cut-net} is a net where 
   \begin{varitemize}
    \item all the addresses occurring in the bases are pairwise disjoint or equal,
    \item each address appears in at most two bases, in this case it is a cut, 
  \item the graph whose vertices are the addresses and whose edges are the cuts is connected and acyclic. 
    \end{varitemize}
Given a cut-net we can distinguish a particular design, called \textbf{main design}, 
it is the only positive design of the cut-net, if there is one. Otherwise it is the only negative design whose base contains an address that is not part of a cut. The first rule (in the bottom up view) of the main design is called the \textbf{main rule}.
\\A cut-net is \textbf{closed} when all addresses in bases are part of a cut. 
\\ We remark that in the case of a closed cut-net, the main design is a positive design, then its main action is positive. 

\end{defi}
Interaction, i.e., cut-elimination, is defined on cut-nets. First we consider the case of a closed cut-net, in this case if the interaction ends (without failing) the result is $\{ \da$$\}$, while in the general case it can be a design $\de{D} \neq \{ \da$$\}$.
\begin{defi}
Let $\de{R}$ be a closed cut-net. The design resulting from the \textbf{interaction}, denoted by $\llbracket \de{R}\rrbracket$ and called the \textbf{normalization} of $\de{R}$, is defined in the following way: let $\de{D}$ be the main design of $\de{R}$, with first action $\kappa$,
\begin{varitemize}

\item \textbf{Daimon}: if $\kappa$ is the daimon, then $\llbracket \de{R}\rrbracket = \{ \da$$\} $

\item otherwise $\kappa$ is a proper positive action $(+, \sigma, I)$ such that $\sigma$ is part of a cut with another design with last rule $(-, \sigma, N)$, ($N$ aggregates ramifications of actions on the same address $\sigma$)

\begin{varitemize}

\item \textbf{Failure}: If $I \notin N$, the interaction fails. 

\item \textbf{Conversion}: otherwise, the interaction follows the connected part of subdesigns\footnotemark\footnotetext{A subdesign of a design $\de{D}$ is a subtree of $\de{D}$, that is still a design.} obtained from $I$ with the rest of $\de{R}$. 

\end{varitemize}

\end{varitemize}

\end{defi}

\begin{defi}
Now let's consider \textbf{the general case}, where the net is not supposed to be closed. Thus the main rule can be positive or negative, and besides the cases of the precedent definition there are two new possibilities :
 \begin{varitemize}
 \item \textbf{Positive commutation}:  the net is positive, with main rule $(+,\xi, I)$ but  $\xi$ is not a cut. Let $\de{D_i}$ be as in the case of conversion above, and define $\de{R}'$ by replacing $\de{D}$ with the $\de{D_i}$. $\de{R}'$ splits into several connected components, and each $\de{D_i}$ lies in a component $\de{R}_i$, which is a net, and the $\de{R}_i$ are pairwise distinct. Let the $\de{E}_i$ be the respective normal forms of the $\de{R}_i$  (they exists because the $\de{R}_i$ are negative). The normal form of $\de{R}$ is the design whose first rule is $(+,\xi,I)$ and which proceeds with $\de{E}_i$ above the premise of index $i$.

  \item \textbf{Negative commutation}: The net is negative, with main design $\de{D}$ and main rule $(-,\xi,N)$. For $I \in N$ let $\de{D}_I$ be the subdesign of $\de{D}$ above the premise of index $I$ of the last rule, and let us replace $\de{D}$ with $\de{D}_I$ in $\de{R}$, and let $\de{R}_I$ be the connected component of $\de{D}_I$ (we don't directly get a net, as above, because of weakening). Let $N'$ be the subset of $N$ made of those $I$ for which $\de{R}_I$ has a normal form $\de{E}_I$. The normal form of $\de{R}$ is defined as the design ending with $(-,\xi,N)$ and which proceeds with $\de{E}_I$ above the premise of index I.
   \end{varitemize}

\end{defi}
In other terms, the positive commutation recopies the first rule (in the bottom-up view) and then proceeds separately above each premise. The negative commutation does the same, but some premises may disappear.

\begin{example}
Let $\de{E}, \de{F}$ be the following designs.\\  
\scalebox{.8}
{$\de{E}= \shortstack{ \shortstack{$\hrulefill_{\emptyset}$ \\$\vdash \alpha.0.0$ \\\hrulefill\\ $\alpha.0 \vdash $} \hspace{1em}  \shortstack{$\hrulefill_{\emptyset}$ \\$\vdash \alpha.1.1$ \\\hrulefill\\ $\alpha.1 \vdash $}  \\\hrulefill\\ $\vdash \alpha$}$, \hspace{2em} 
$\de{F}= \shortstack{$\vdots$\\$\de{G}$ \\$\vdash \beta$ \\\hrulefill\\  $\alpha.1.1 \vdash \beta$ \\\hrulefill\\ $\vdash \alpha.1, \beta$ \\\hrulefill\\ $\alpha.0.0 \vdash \alpha.1, \beta$ \\\hrulefill\\ $\vdash \alpha.0, \alpha.1, \beta$ \\\hrulefill\\ $ \alpha \vdash \beta $}$ \hspace{2em} 
$\llbracket \de{E}, \de{F}\rrbracket= \de{G}$
}. \\In therms of chronicles it corresponds to 

\scalebox{.8}
{
\begin{tikzpicture}[x=70pt,y=30pt]
\node at (-.6,0) {$\de{E}=$};
\node at (0,0) {$(+,\alpha,\{0,1\})$};
\node at (-1,1) {$(-,\alpha.0,\{0\})$};
\node at (-1,2) {$(+,\alpha.0.0,\emptyset)$};
\node at (1,1.5) {$(-,\alpha.1,\{1\})$};
\node at (1,2.5) {$(+,\alpha.1.1,\emptyset)$};

\draw(1,1.7)--(1,2.3);
\draw(-1,1.2)--(-1,1.8);
\draw(0,.2)--(-1,.8);
\draw(0,.2)--(1,1.3);

\node at (3.6,0) {$= \de{F}$};
\node at (3,0) {$(-, \alpha, \{0,1 \})$};
\node at (3,1) {$(+, \alpha.0, \{0\})$};
\node at (3,2) {$(-, \alpha.0.0, \emptyset)$};
\node at (3,3) {$(+, \alpha.1, \{1 \})$};
\node at (3,4) {$(-, \alpha.1.1, \emptyset)$};
\node at (3,4.5) {$\de{G}$};

\draw(3,.2)--(3,.8);
\draw(3,1.2)--(3,1.8);
\draw(3,2.2)--(3,2.8);
\draw(3,3.2)--(3,3.8);
\draw(3,4.2)--(3,4.3);

\draw[dashed,->,red] (.45,0) -- (2.45,0);
\draw[dashed,->,red] (2.45,0) -- (2.45,1);
\draw[dashed,->,red] (2.45,1) -- (-.45,1);
\draw[dashed,->,red] (-.45,1) -- (-.45,2);
\draw[dashed,->,red] (-.45,2) -- (2.45,2);
\draw[dashed,->,red] (2.45,2) -- (2.45,3);
\draw[dashed,->,red] (2.45,3) -- (1.45,1.5);
\draw[dashed,->,red] (1.45,1.5) -- (1.45,2.5);
\draw[dashed,->,red] (1.45,2.5) -- (2.45,4);
\draw[dashed,->,red] (2.45,4) -- (2.45,4.5);

\end{tikzpicture} 
} \\The dashed line represents the interaction between $\de{E}$ and $\de{F}$. 

\end{example}

\begin{defi}
A design $\de{E}$ and a net $\de{R}$ are \textbf{orthogonal}, noted $\de{E} \perp \de{R}$, when $ \llbracket \de{E}, \de{R} \rrbracket = \{ \da$$\}  $. A set $E$ of designs with the same base is called a \textbf{behaviour} when it is equal to its biorthogonal\footnotemark\footnotetext{ $E^{\perp}$ is the set of designs orthogonal to all the elements of $E$.}, i.e., $E= E^{\perp\perp}$.
\end{defi}


 \begin{defi}
 Given a design $\de{D}$ we define its \textbf{incarnation} in a behaviour $\textbf{G}$ as $|\de{D}|_{\textbf{G}}= \bigcap \{  \de{D}' \, | \, \de{D}' \subseteq \de{D}, \de{D}' \in \textbf{G} \}$.
\\$\de{D}$ is \textbf{material} in $\textbf{G}$ when it is equal to its incarnation in $\textbf{G}$, i.e., $\de{D}= |\de{D}|_{\textbf{G}}$.
\\The \textbf{incarnation} of $\textbf{G}$, $|\textbf{G}|$, is then the set of the material designs in it, i.e., $|\textbf{G}|= \{|\de{D} |_{\textbf{G}} \, |\, \de{D} \in \textbf{G} \}$.  
 \end{defi}
An important construction w.r.t. incarnation and generation of behaviours is the $\da$-shortening of a set of designs:

\begin{defi}\label{Dda} 
A $\da$-\textbf{shorten} of a chronicle $\de{c}$ is either $\de{c}$ or a prefix of $\de{c}$ ended by $\da$, i.e., $\de{c}_{1} \da$, when $\de{c}=\de{c}_{1} \de{c}_{2}$ and $\de{c}_1$ ends with a negative action. Given a set of designs $E$ we define its $\da$-\textbf{shortening} $E^{\da}$ as the set of designs obtained from $E$ by $\da$-\textbf{shortening} chronicles.
\end{defi}

\begin{example}
Let $	E=\{\de{D}\}$ where 
\scalebox{.8}
{$\de{D}= \shortstack{ \shortstack{  $\hrulefill_{\emptyset}$  \\ $\vdash \alpha.1.0$ \\\hrulefill\\ $\alpha.1 \vdash$    } \hspace{1em} \shortstack{  $\hrulefill_{\emptyset}$  \\ $\vdash \alpha.3.1$ \\\hrulefill\\ $\alpha.3 \vdash$} \\\hrulefill\\ $  \vdash \alpha$}$}. Then $E^{\da}$ contains $\de{D}$ and the following designs: \scalebox{.8}
{$\shortstack{ \shortstack{  $\hrulefill_{\da}$  \\ $\vdash \alpha.1.0$ \\\hrulefill\\ $\alpha.1 \vdash$    } \hspace{1em} \shortstack{  $\hrulefill_{\da}$  \\ $\vdash \alpha.3.1$ \\\hrulefill\\ $\alpha.3 \vdash$} \\\hrulefill\\ $  \vdash  \alpha$ }$, \hspace{1em} $\shortstack{ \shortstack{  $\hrulefill_{\emptyset}$  \\ $\vdash \alpha.1.0$ \\\hrulefill\\ $\alpha.1 \vdash$    } \hspace{1em} \shortstack{  $\hrulefill_{\da}$  \\ $\vdash \alpha.3.1$ \\\hrulefill\\ $\alpha.3 \vdash$} \\\hrulefill\\ $ \vdash  \alpha$ }$, \hspace{1em} $\shortstack{ \shortstack{  $\hrulefill_{\da}$  \\ $\vdash \alpha.1.0$ \\\hrulefill\\ $\alpha.1 \vdash$    } \hspace{1em} \shortstack{  $\hrulefill_{\emptyset}$  \\ $\vdash \alpha.3.1$ \\\hrulefill\\ $\alpha.3 \vdash$} \\\hrulefill\\ $  \vdash \alpha$ }$, \hspace{1em} \shortstack{    $\hrulefill_{\da}$\\$ \vdash \alpha$}.}
 
\end{example}

\begin{lemme}\label{Edai} 
Given a set $E$ of designs on the same base, $E^{\da} \subseteq E^{\perp\perp}$.
\end{lemme}
\begin{proof}
If $\de{D} \in E^{\da}$ then there exists $\de{E} \in E$ s.t. $\de{D}$ is obtained from $\de{E}$ by $\da$-shortening chronicles. By definition $\de{E} \perp \de{F}$ for all $\de{F} \in E^{\perp}$. $\de{D}$ is a $\da$-shortening of $\de{E}$, therefore $\de{D} \perp \de{F}$ for all $\de{F} \in E^{\perp}$, i.e., $\de{D} \in E^{\perp\perp}$. Thus $E^{\da} \subseteq E^{\perp\perp}$.  
\end{proof}

\begin{lemme}\label{matdes}
Let $\de{D}$ be a material design in a behaviour $\textbf{G}$, then all the designs in its $\da$-shortening are material in $\textbf{G}$, i.e., if $\de{D} \in |\textbf{G}|$ then $\{ \de{D}  \}^{\da}$ $\subseteq | \textbf{G} |$.

\end{lemme}

\begin{proof}
Let $\de{E} \in \{ \de{D}  \}^{\da}$, then either $\de{E}=\de{D}$ (in this case there is nothing to prove) or $\de{E}$ is obtained from $\de{D}$ by $\da$-shortening chronicles. We prove by contradiction that $\de{E} \in |\textbf{G}|$. Let $\de{F} \subsetneq \de{E}$ s.t. $\de{F} \in \textbf{G}$, that is $\de{E} \notin |\textbf{G}|$. This means that there exists a negative action $\kappa^{-}$ and a chronicle $\de{c} \in \de{F}$, s.t. $\de{c} \kappa^{-} \in \de{E}$ and $\de{c}\kappa^{-} \notin \de{F}$. $\de{c} \kappa^{-} \in \de{E}$ and $\de{E} \in \{\de{D} \}^{\da}$, then $\de{c} \kappa^{-} \in \de{D}$. For all $\de{G} \in \textbf{G}^{\perp}$, $\de{G} \perp \de{E}$ and $\de{G} \perp \de{F}$. This means that the computation of $\llbracket \de{E}, \de{G} \rrbracket$ does not use $\kappa^-$. Let $\de{F}'$ be $\de{D}$ without the chronicle $\de{c} \kappa^{-}$ and its extensions, then $\de{F}' \subsetneq \de{D}$ and $\de{G} \perp \de{F}' $ for all $\de{G} \in \textbf{G}^{\perp}$, i.e., $\de{F}' \in \textbf{G}$. Then $\de{D}$ is not material in $\textbf{G}$ (contradiction). 
\end{proof}

\begin{lemme}\label{matset}
Let $E$ be a subset of the incarnation of a behaviour $\textbf{G}$, then all the designs in its $\da$-shortening are material in $\textbf{G}$, i.e., if $E \subseteq |\textbf{G}|$, then $E^{\da}$ $\subseteq |\textbf{G}|$.
\end{lemme}

\begin{proof}
$E^{\da} = \bigcup_{\de{E} \in E} \{\de{E}\}^{\da}$. From Lemma \ref{matdes} $\{\de{E}\}^{\da} \subseteq | \textbf{G}|$ for all $ \de{E}  \in E$. Thus $E^{\da} \subseteq |\textbf{G}|$. 
\end{proof}
Now we introduce the notion of \textbf{principal} set of designs. Roughly speaking a set $E$ is principal when it contains enough $\da$-free designs to recover the behaviour $E^{\perp\perp}$, i.e., the $\da$-free generators of $E^{\perp\perp}$. This notion will be central in our representation of Martin-L\"of Type Theory. 

\begin{defi}\label{princ}
A set $E$ of designs is \textbf{principal} when its elements  are $\da$-free and its $\da$-shortening is the incarnation of its biorthogonal, i.e., $|E^{\perp\perp}| =E^{\da}$. 
\end{defi}
In Ludics a behaviour is completely determined by its material designs. Moreover $\da$-free designs characterize the representation of MALL proofs in \cite{LocSol}. This notion of principal set looks like a good candidate to represent the notion of canonical terms, indeed in Type Theory a type is completely determined by its canonical terms. 
\subsection{From chronicles to paths}
The incarnation of a set of designs is characterized in \cite{IncLud} introducing the following notions and Proposition. We use this result to prove that some sets of designs are principal.

\begin{defi} 
A \textbf{base of net} $\beta$ is a non-empty finite set of sequents of pairwise disjoint addresses: $\Gamma_1\vdash \Delta_1,...,\Gamma_n\vdash \Delta_n$ such that each $\Gamma_i$ contains exactly one address $\xi_i$, except at most one that may be empty, and the $\Delta_j$ are finite sets. A sequence of actions $\de{s}$ is based on $\beta$ if an action of $\de{s}$ either is hereditarily justified\footnotemark\footnotetext{An action $\kappa$ is justified by the action $\kappa'$ when the address of $\kappa$ is built from the address of $\kappa '$. $\kappa$ and $\kappa '$ always have opposite polarity. For instance $(+, \xi.0.2, \{ 0\})$ is justified by $(-, \xi.0, \{2 \})$ and $(-, \alpha. 3, \{ 2\})$ is justified by $(+, \alpha, \{ 3\})$.} by an element of one of the sets $\Gamma_i$ or $\Delta_i$, or is the daimon and in this case is the last action of $\de{s}$. An action is initial if its address is an element of one of the sets $\Gamma_i$ or $\Delta_i$. 
\\Let $\de{s}$ be a sequece of actions based on $\beta$, the \textbf{view}    $\view{\de{s}}$ is the subsequence of $\de{s}$ defined as follows: $\view{\epsilon} = \epsilon$; $\view{\kappa} = \kappa$; $\view{w \kappa^+} = \view{w} \kappa^+$; $\view{w \kappa^-}= \view{w_0} \kappa^-$ where $w_0$ either is empty if $\kappa^-$ is initial or is the prefix of $w$ ending with the positive action which justifies $\kappa^-$.

\end{defi}

\begin{defi} \label{path} 

A $\textbf{path}$ $\de{p}$ based on $\beta$ is a finite sequence of actions based on $\beta$ such that
\\ \emph{Alternation}: The polarity of actions alternates between positive and negative.
\\ \emph{Justification}: A proper action is either justified, i.e., its address is built by one of the previous actions in the sequence, or it is called initial with a address in one of the $\Gamma_i$ (resp. $\Delta_i$) if the action is negative (resp. positive).
\\ \emph{Negative jump} (no jump on positive actions) : Let $\de{q} \kappa$ be a prefix of $\de{p}$. If $\kappa$ is a positive proper action justified by a negative action $\kappa'$ then $\kappa' \in $\view{ $\de{q}$ }. If $\kappa$ is an initial positive proper action then its address belongs to one $\Delta_i$ and  either $\kappa$ is the first action of $\de{p}$ and $\Gamma_i$ is empty, or $\kappa$ is immediately preceded in $\de{p}$ by a negative action with a address hereditarily justified by an element of $\Gamma_i \cup \Delta_i$.
\\ \emph{Linearity}: Actions have distinct addresses. 
\\ \emph{Daimon}: If present, a daimon ends the path. If it is the first action in the $\de{p}$ then one of the $\Gamma_i$ is empty. 
\\ \emph{Totality}: If there exists an empty $\Gamma_i$, then $\de{p}$ is non empty and begins either with $\da$ or with a positive action with a address in $\Delta_i$.
\end{defi}

\begin{rem}
Let $\kappa$ be a positive proper action justified by a negative action $\kappa'$. $\kappa' \in \view{ \de{p}}$ iff there is a sequence $\alpha_n^- \alpha_n^+...\alpha_0^- \alpha_0^+ $ with $\alpha_0^+= \kappa$,  $\alpha_0^-= \kappa'$ such that $\alpha_i^-$ immediately precedes $\alpha_i^+$ in $\de{p}$ and $\alpha_{i+1}^+$ justifies $\alpha_i^-$.
\\\\We remark that a \textbf{chronicle} $\de{c}$ is a path such that each negative action is justified by the immediately precedent action. 
\end{rem}

\begin{defi} 
Two paths $\de{p}_1,\de{p}_2$ on the same base are coherent, noted $\de{p}_{1} \coh \de{p}_{2}$, when:

\begin{varitemize}  
  \itemsep0em

\item their first action have same polarity: either positive and the first actions are the same or negative;

\item for all sequences $w_1\kappa_1^+$ and $w_2 \kappa_2^+$ respectively prefixes of $\de{p}_1$ and $\de{p}_2$: if $\view{ w_1} = \view{ w_2}$ then $\kappa_1^+ = \kappa_2^+$;

\item for all sequences $w_1\kappa_1^-$ and $w_2 \kappa_2^-$ respectively prefixes of $\de{p}_1$ and $\de{p}_2$, let $w_1^0$ (resp. $w_2^0$) be either the empty sequence if $\kappa_1^-$ (resp. $\kappa_2^-$) is initial or the prefix of $\de{p}_1$ (resp. $\de{p}_2$) ending by the justification of $\kappa_1^-$ (resp. $\kappa_2^-$),
  
    \begin{varitemize}     
  \item if $\view{ w_1^0} = \view{ w_2^0}$ and $\kappa_1^- $ and $\kappa_2^-$ have distinct addresses then for all actions $\sigma_1$ and $\sigma_2$ such that $w_1 \kappa_1^-  w_1' \sigma_1$ and $w_2 \kappa_2^-  w_2' \sigma_2$ are respectively prefixes of $\de{p}_1$ and $\de{p}_2$, and such that $\kappa_1^- \in  \view{w_1 \kappa_1^-  w_1' \sigma_1}$ and $\kappa_2^- \in  \view{w_2 \kappa_2^-  w_2' \sigma_2}$, $\sigma_1$ and $\sigma_2$ have distinct addresses. 
    
  \end{varitemize}

\end{varitemize}

\end{defi}
If $\de{p}_1 \coh \de{p}_2$ then in particular either one extends the other or they first differ on negative actions.

\begin{defi}\label{shor}
Given a path $\de{p}$, a $\da$-shorten of $\de{p}$ is either $\de{p}$ or a prefix of $\de{p}$ ended by $\da$, i.e., $\de{p}_{1} \da$, when $\de{p}=\de{p}_{1} \de{p}_{2}$ and $\de{p}_{1}$ ends with a negative action.
\end{defi}

\begin{example}
Let $\de{p}= (+, \xi, \{1\}) (-, \xi.1, \{0\})(+ , \xi.1.0, \{3\})(-, \xi.1.0.3, \{2\})\\(+, \xi.1.0.3.2, \emptyset)$. Then the $\da$-shortens of $\de{p}$ are: $\de{p}$, $\da$, $(+, \xi, \{1\}) (-, \xi.1, \{0\}) \da$ and 
$ (+, \xi, \{1\}) (-, \xi.1, \{0\}) $ $(+, \xi.1.0, \{3\})(-, \xi.1.0.3, \{2\}) \da$.  
\end{example}
We say that $\de{p}$ is a path of a design $\de{D}$ when the views of all the prefixes of $\de{p}$ are chronicles of $\de{D}$.

\begin{defi}\label{tild}
Given a $\da$-free (or \textbf{proper}) path $\de{p}$ of a certain design we define the \textbf{opposite} of $\de{p}$, $\overline{\de{p}}$ as the sequence of actions obtained from $\de{p}$ by changing polarity of each action: $\overline{\epsilon} = \epsilon$,  $\overline{\de{p} (+, \xi, I)} = \overline{\de{p}} (-, \xi, I)$, $\overline{ \de{p} (-, \xi, I) }= \overline{\de{p}} (+, \xi, I) $. We define the \textbf{dual} $ \widetilde{\de{p}}$ of $\de{p}$ as follows: 
\begin{varitemize}

\item $\widetilde{w \da}$ $=\overline{w}$, $\widetilde{w \kappa^{+}}= \overline{w \kappa^{+}} \da$ if $\kappa^{+}$ is positive and $\kappa^{+}\neq \da$, 

\item $\widetilde{w \kappa^{-}}= \overline{w \kappa^{-}}$ for all negative action $\kappa^{-}$.

\end{varitemize}
\end{defi}
Given a path $\de{p}$, $\widetilde{\de{p}}$ is not always a path, as showed in the following example.
\begin{example}
Let $\de{p} = (+, \xi, \{ 0 \}) (-, \xi.0, \{ 1\} ) (+, \sigma, \{ 1\})$, then $\widetilde{\de{p}} = (-, \xi, \{0 \})$ 
\\$(+, \xi.0, \{ 1\} ) (-, \sigma, \{ 1\}) \da$ which is not a path because of the action $(-, \sigma, \{ 1\})$: it is a negative action but it is neither an initial action nor justified. 
\end{example}
Given a set E of designs on the same base, a \textbf{visitable} path in $E$ is a sequence of actions $\de{p}$ in a design $\de{D} \in E$ which are visited during a normalization with a net of designs of $E^{\perp}$.  Visitable paths correspond to the notion of plays in Hyland-Ong-Nickau game semantics. A characterization of \textbf{visitable} paths is given in \cite{IncLud}: to be visitable in $E$ a path must be such that its dual $\widetilde{\de{p}}$ is a path and for all prefix $w \kappa^{-}$ of $\de{p}$, for all $ \de{D} \in E$, if $w$ is a path of $\de{D}$ then $w \kappa^{-}$ is a path of $\de{D}$.\\\\ 
\textbf{Notation}: We denote with $\view{\view{\de{p}}}$ the set of views of the (non empty) prefixes of $\de{p}$. Given a set $E$ of designs on the same base, $P_{E}$ and $V_E$ respectively denote the set of paths and the set of visitable paths of $E$. Given a set $C$ of paths, $\widetilde{C}=\{  \widetilde{\de{p}} : \de{p} \in C \}$ and $\view{ \view{ \widetilde{C} } } =\{ \view{ \view{ \widetilde{\de{p}} } }: \widetilde{\de{p}} \in \widetilde{C}  \}$, i.e., it is the set of views of the prefixes of paths of $\widetilde{C}$.

\begin{defi} \label{finstablesat}
Let $E$ be a set of designs based on $\beta$ and $C$ a set of paths of designs of $E$. $C$ is \textbf{finite-stable} when for all strictly increasing sequence $(\de{p}_{n})$ of elements of $C$, if $\bigcup \view{ \view{ \de{p}_{n} } }$  is included in a design of $E$ then the sequence $(\de{p}_{n})$ is finite. $C$ is \textbf{saturated} when for all prefix $\de{q}$ of an element of $C$ such that $\de{q}\kappa^{+} \in V_{E}$ ($\kappa^+ \neq \da$) we have that $\de{q} \kappa^{+}$ is a prefix of an element of $C$.
\end{defi}

\begin{prop}\label{compinc}(5.17, \cite{IncLud})\\
Let $E$ be a set of designs based on $\beta$. The incarnation of the behaviour generated by $E$ is computed applying the following steps:
\begin{varitemize}

\item Compute $V_{E}$, the set of visitable paths of $E$.

\item Obtain $|E^{\perp}|$ from the set of maximal cliques $\widetilde{C}$ of $\widetilde{V_{E}}$ such that $C$ is finite-stable and saturated.

\item Compute $V':=V_{|E^{\perp} |}$, the set of visitable paths of $|E^{\perp} |$.

\item Obtain $|E^{\perp\perp}|$ from the set of maximal cliques $\widetilde{C'}$ of $\widetilde{V'}$ such that $C'$ is finite-stable and saturated.

\end{varitemize}
\end{prop}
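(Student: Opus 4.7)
The plan is to reduce Proposition \ref{compinc} to a single \emph{key lemma}: for any set $F$ of designs on a base $\beta$, the incarnation $|F^{\perp}|$ is in bijective correspondence with the maximal finite-stable saturated cliques $\widetilde{C}$ of $\widetilde{V_F}$. Once this lemma is proved, the proposition follows by applying it twice. I would first apply it with $F=E$ to get the second bullet. Then, using the standard identity $|F^{\perp\perp}|^{\perp\perp}=F^{\perp\perp}$ for a behaviour (so in particular $|E^{\perp}|^{\perp}=E^{\perp\perp}$, because $E^{\perp}$ is a behaviour and taking incarnation preserves the orthogonal), I would apply the lemma with $F=|E^{\perp}|$ to recover $|E^{\perp\perp}|$. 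Steps three and four of the proposition are exactly this second application, since $V':=V_{|E^{\perp}|}$.

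For the key lemma, my strategy has two directions. For the forward direction, given a material $\de{D}\in |F^{\perp}|$, I would look at its set of proper paths that are duals of visitable paths of $F$: the interaction traces $\de{D}$ realizes against designs of $F$ are, via the $\widetilde{\cdot}$ operation, precisely visitable paths of $F$. Coherence inside $\de{D}$ (all chronicles are pairwise coherent and prefix-closed) promotes to coherence of the corresponding set of paths, giving a clique in $\widetilde{V_F}$. Finite-stability follows because if the union of views of a strictly increasing sequence of such paths were contained in a design of $F$, then this would yield an infinite branch of chronicles in $\de{D}$ never terminated by a daimon, contradicting the tree-structure and totality of $\de{D}$. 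Saturation translates materiality directly, via Lemma \ref{matdes}: any available $\kappa^{+}$-continuation in $V_{F}$ that $\de{D}$ does not provide could be $\da$-shortened away, contradicting that $\de{D}$ is its own incarnation. Maximality of the clique corresponds to $\de{D}$ being a complete material strategy: any further compatible visitable path, if added, would either violate coherence or force a strict enlargement of $\de{D}$ still lying in $F^{\perp}$, again contradicting materiality.

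For the reverse direction, given a maximal finite-stable saturated clique $\widetilde{C}$, I would build a design $\de{D}_{\widetilde{C}}$ by taking the union of the views of all non-empty prefixes of paths in $\widetilde{C}$, plus their $\da$-shortenings where the characterisation of visitable paths in \cite{IncLud} forces a daimon. Forest-closure and coherence of chronicles follow from the clique property; positivity and totality from the visitability conditions; finite-stability guarantees the result is genuinely a design (finite chronicles ending positively) rather than just a forest of infinite branches. One then checks $\de{D}_{\widetilde{C}}\in F^{\perp}$ by running the normalisation against any $\de{E}\in F$: the resulting trace is a visitable path of $F$, and by construction its dual lies in $\widetilde{C}$, forcing termination by $\da$. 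Materiality follows from maximality: removing any chronicle would either break saturation or expose a visitable path whose dual is not covered, hence the reduced design would fail to be orthogonal to some element of $F^{\perp\perp}$.

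The main obstacle, in my view, is the saturation/materiality equivalence in the presence of $\da$-shortenings. Concretely, one must juggle three layers simultaneously: (i) which $\kappa^{-}$-extensions are forced into the design because some orthogonal interaction requires them, (ii) which $\kappa^{+}$-extensions are forced by saturation against $V_{F}$, and (iii) the fact that Lemma \ref{matdes} and Lemma \ref{matset} allow us to freely $\da$-shorten without leaving $|F^{\perp}|$. Balancing these so that the constructed $\de{D}_{\widetilde{C}}$ is exactly material—neither too small (so it is still in $F^{\perp}$) nor too large (so it equals its own incarnation)—is where the proof becomes delicate, and is essentially the content carried over from the characterisation results of \cite{IncLud}. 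Once that correspondence is established the two-step iteration in the proposition is purely formal.
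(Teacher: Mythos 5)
The paper does not prove this statement at all: it is quoted verbatim as Proposition 5.17 of \cite{IncLud}, with no proof environment following it in the source. There is therefore no in-paper argument to compare yours against; what you have written is an attempt to reconstruct the proof of the cited result itself.

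As such a reconstruction, your skeleton is reasonable: a single characterisation lemma (material designs of $F^{\perp}$ correspond to maximal finite-stable saturated cliques of $\widetilde{V_F}$) applied twice, the second application being legitimate because $|E^{\perp}|^{\perp}=E^{\perp\perp}$, so that cliques over $\widetilde{V'}$ yield $\bigl| |E^{\perp}|^{\perp}\bigr| = |E^{\perp\perp}|$. But the key lemma \emph{is} the theorem: your two paragraphs on it restate the correspondences that must be established rather than establish them, and two of the identifications are off. Saturation of $C$ does not ``translate materiality''; it expresses that $\de{D}$ must answer every positive move available to a design of $F$ at the relevant point, i.e., it is the combinatorial counterpart of $\de{D}\in F^{\perp}$ (orthogonality), while materiality is carried by $\de{D}$ being exactly $\view{ \view{ \widetilde{C} } }$ for a \emph{maximal} clique. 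Likewise finite-stability is not about ``the tree-structure and totality of $\de{D}$'' (designs may have infinite chronicles); it is what guarantees termination of normalisation against designs of $F$, hence convergence to $\{\da\}$. So if the intent is to cite \cite{IncLud}, as the paper does, your outline is an adequate gloss; if the intent is a self-contained proof, the central lemma remains entirely open.
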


\subsection{Martin-L\"of Types in Ludics: our Methodology.} \label{method}
In Ludics, terms come before types, as to define a behaviour we have to say what are the designs that belong to it. The corresponding of Martin-L\"of's introduction rule is the definition of which designs represent the canonical terms of a certain type, i.e., the definition of a set of designs that should be principal. The corresponding of formation rule is then to verify that this set is principal. Instead of an elimination rule which says how to manipulate these terms, in Ludics the notion of interaction \cite{LocSol} shows how to manipulate designs, i.e., making them interact between them. The equality rule addresses on canonical terms, it corresponds in Ludics to the equality between a cut-net and its normal form: what remains after eliminating a cut. They are equal in the sense that they yield the same canonical term, i.e., they have the same normalization.\\
We can summarize our framework in the following way: a type $A$ is represented by a behaviour $\textbf{A}^{\alpha}$, on a positive atomic base $\vdash \alpha$ arbitrarily chosen, generated by a \textbf{principal} set of designs $\mathbb{A}^{\alpha}$  (i.e. $\textbf{A}^{\alpha}=(\mathbb{A}^{\alpha})^{\perp\perp}$, $|(\mathbb{A}^{\alpha})^{\perp\perp}| = (\mathbb{A}^{\alpha})^{\da}$ and $\A^{\alpha}$ is $\da$-free). The terms of type $A$ are represented by the elements of $\textbf{A}^{\alpha}$, in particular the canonical terms are the material $\da$-free designs of $\textbf{A}^{\alpha}$, i.e., the designs of $\mathbb{A}^{\alpha}$, while the non canonical terms of type $A$ are the cut-nets $\de{R}$ s.t. their normalization represents a canonical term, i.e., $   \llbracket \de{R}   \rrbracket  \in \A^{\alpha}$. The behaviour $\textbf{A}^{\alpha}$ is ``bigger'' than the type $A$, meaning that $\textbf{A}^{\alpha}$ also contains designs that do not represent any term of type $A$. In the following we omit the superscript $\alpha$ that denotes the base and write $\A$ or $\textbf{A}$, apart where it can be source of misunderstanding.

\section{Simple Types in Ludics}\label{SympleTyp}

In this section we illustrate our proposal, focusing on some simple types. In section \ref{Nat1} we treat the representation of natural numbers, we define a set $\N$ of canonical terms and prove that $\N$ is principal. In section \ref{List} we do the same for lists of length $n$ of natural numbers, with the set $\Ln$. In section \ref{flash} we consider the type arrow, together with some examples of functions on $\N$ and $\Ln$. Only main proofs\footnotemark\footnotetext{You can find other proofs in an extended version on Arxiv. } are given in the paper.

\subsection{Natural Numbers}\label{Nat1} 

A natural number $n \in \mathbb{N}$ is represented by a design $\textbf{n}_\sigma$ on a unary positive base $\vdash \sigma$, in the following inductive way:\\
\begin{minipage}{.45\textwidth}
\scalebox{.9}
{
$\textbf{0}_{\sigma}=\shortstack{ $\hrulefill_{\emptyset}$\\ $\vdash \sigma$}$
}
\scalebox{.9}
{$(\textbf{n\pmb{+}1})_{\sigma}=\shortstack{$ \textbf{n}_{\sigma.0.1} $ \\\hrulefill\\ $ \sigma.0 \vdash $ \\\hrulefill\\ $	\vdash \sigma$ }$.
}
\end{minipage}
\begin{minipage}{.5\textwidth}
 In terms of chronicles: \\
 $\textbf{0}_{\sigma}= \{ (+, \sigma, \emptyset)\}$,
 \\ 
$ (\textbf{n\pmb{+}1})_{\sigma}= (+, \sigma, \{0\}) (-, \sigma.0, \{1\}) \textbf{n}_{\sigma.0.1} $.
\end{minipage}
\\
Abusively, we may write $\textbf{n}$ instead of $\textbf{n}_{\sigma}$. Furthermore we abbreviate the design $\textbf{n}$ as \scalebox{.9}{$\shortstack{ $\hrulefill_{\emptyset}$ \\ $\vdash \sigma.\overline{n}$ \\ \hrulefill\\  \hrulefill\\$ \vdash \sigma$ }$}, where $\overline{0} := \epsilon$ (the empty sequence) and $\overline{n+1}:=\overline{n}.0.1$. We denote with $\N$ the set of designs which represent natural numbers, i.e., $\N=\{ \textbf{n}\, | \, n \in \mathbb{N}\}$. This representation of natural numbers is very close to Terui's representation in Computational Ludics \cite{TeruiComp}, they only differ on the polarity.
\\\\To prove that $\N$ is principal, we prove first some preliminary results.

\begin{lemme}\label{oppch} 
For all $\textbf{n} \in \N$ if $\de{c}$ is a chronicle of $ \textbf{n}$ then $\widetilde{\de{c}}$ is a chronicle.
\end{lemme}

\begin{proof}

For each action $\kappa$ of a certain chronicle of $\textbf{n}$ (that is necessarily proper), the address of $\kappa$ is determined from the immediatly precedent action (in particular all the negative actions in $\textbf{n}$ give rise to only one possible address for the positive action which follows), then when we change the polarity of all the actions of $\de{c}$, we find a sequence of proper actions where the address of each action is determined by the action just before, i.e., a chronicle.   
\end{proof}
The chronicles of two designs of $\N$ are either the same (when they represent the same natural number) or they differ on a positive action on the same base $(+, \sigma. \overline{i}, \{0\})$ and $(+, \sigma.\overline{i}, \emptyset)$ (when one is $\textbf{i}$ and the other is some $\textbf{j}$, where $j > i$).

\begin{prop}\label{chNat} 
Let $\textbf{n},\textbf{n}' \in \N$, and $\de{c} $ be a chronicle of $ \textbf{n}$.

\begin{varitemize}
  
\item If $n=n'$, then $\de{c} \in \textbf{n}'$.  

 \item If $n>n'$, then  
 $\exists \de{c}'$ s.t. $ \de{c}' (+, \sigma.\overline{n'}, \{ 0\}) \preccurlyeq \de{c}$ and\footnotemark\footnotetext{ $\de{c}' (+, \sigma.\overline{n'}, \{ 0\}) \preccurlyeq \de{c}$ denotes that $\de{c}' (+, \sigma.\overline{n'}, \{ 0\})$ is an initial subsequence of $\de{c}$.} $\de{c}' (+, \sigma.\overline{n'}, \emptyset) \in \textbf{n}'$.

 \item If $n<n'$,then $\exists \de{c}'$ s.t. $\de{c} = \de{c}' (+, \sigma.\overline{n}, \emptyset)$ and $\de{c}' (+, \sigma.\overline{n}, \{ 0\}) \in \textbf{n}'$.

 \end{varitemize}

\end{prop}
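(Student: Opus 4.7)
The plan is to work directly from the inductive definition of $\textbf{n}_\sigma$. A simple induction on $n$ shows that each design $\textbf{n}$ is \emph{linear}: it contains a unique maximal chronicle $\mu_n$, and every chronicle of $\textbf{n}$ is one of its prefixes. Unfolding the definition gives $\mu_0=(+,\sigma,\emptyset)$ and, for $n\geq 1$,
$$\mu_n=(+,\sigma,\{0\})(-,\sigma.0,\{1\})(+,\sigma.\overline{1},\{0\})(-,\sigma.\overline{1}.0,\{1\})\cdots(+,\sigma.\overline{n-1},\{0\})(-,\sigma.\overline{n-1}.0,\{1\})(+,\sigma.\overline{n},\emptyset).$$
In particular, $\mu_n$ and $\mu_{n'}$ share a common prefix $w_m$ of length $2m$, where $m=\min(n,n')$.

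The first bullet ($n=n'$) is then immediate, since $\textbf{n}$ and $\textbf{n}'$ coincide as sets of chronicles, so every chronicle of $\textbf{n}$ is already a chronicle of $\textbf{n}'$.

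For the second bullet, assume $n>n'$. By the observation above, $w_{n'}$ (which is empty when $n'=0$) is a common prefix of $\mu_n$ and $\mu_{n'}$; it is followed in $\mu_n$ by the positive action $(+,\sigma.\overline{n'},\{0\})$ (since $n>n'$ guarantees a further continuation) and in $\mu_{n'}$ by the terminating action $(+,\sigma.\overline{n'},\emptyset)$. Setting $\de{c}'=w_{n'}$, any chronicle $\de{c}$ of $\textbf{n}$ that extends $w_{n'}(+,\sigma.\overline{n'},\{0\})$ satisfies $\de{c}'(+,\sigma.\overline{n'},\{0\})\preccurlyeq\de{c}$, and $\de{c}'(+,\sigma.\overline{n'},\emptyset)=\mu_{n'}\in\textbf{n}'$. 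The third bullet ($n<n'$) is symmetric: a chronicle $\de{c}$ of $\textbf{n}$ ending with the terminal action $(+,\sigma.\overline{n},\emptyset)$ must be $\mu_n$ itself, so writing $\de{c}=\de{c}'(+,\sigma.\overline{n},\emptyset)$ with $\de{c}'=w_n$, we find that $\de{c}'(+,\sigma.\overline{n},\{0\})$ is a prefix of $\mu_{n'}$, hence a chronicle of $\textbf{n}'$.

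The argument is purely combinatorial and there is no deep obstacle: the main care lies in tracking the alternating addresses $\sigma.\overline{i}$ along the chain and handling the edge case $n'=0$ (where $w_{n'}=\epsilon$). No machinery beyond the explicit form of $\mu_n$ and the prefix-closure of designs is required.
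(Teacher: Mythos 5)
Your proof is correct and follows essentially the same route as the paper's: both arguments read off the unique maximal chronicle of $\textbf{n}$ from the inductive definition and locate the first positive action ($(+,\sigma.\overline{m},\{0\})$ versus $(+,\sigma.\overline{m},\emptyset)$ with $m=\min(n,n')$) on which the two designs diverge. Your version is marginally more explicit in writing out $\mu_n$ and in flagging that the second and third bullets only apply to chronicles long enough to reach the divergence point, a caveat the paper's proof leaves implicit.
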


\begin{proof}
 If $n=n'$ then $\textbf{n}=\textbf{n}'$, so $\de{c} \in \textbf{n}'$. If $n >n'$, let $\de{c}'$ be the prefix of $\de{c}$ which ends with the action $(-, \sigma.\overline{n'-1} 0, \{ 1\})$, s.t. $\de{c}' (+, \sigma. \overline{n}, \{0\}) \preccurlyeq \de{c}$. Then $\de{c}' (+, \sigma.\overline{n'} , \emptyset) \in \textbf{n}'$. If $n<n'$ , if $\de{c}$ is not maximal in $\textbf{n}$, then $\de{c} \in \textbf{n}'$, otherwise its last action is $(+, \sigma.\overline{n} , \emptyset)$. Let $\de{c}'$ be $\de{c}$ without its last action, then $\de{c}=\de{c}' (+, \sigma.\overline{n} , \emptyset)$ and $\de{c}' (+, \sigma.\overline{n}, \{ 0\}) \in \textbf{n}'$.  
\end{proof}

The designs of $\N$ cannot start differ on a negative action, as showed in the following Lemma. 
\begin{lemme}\label{negact} 
For all $\textbf{n},\textbf{n}' \in \N$, if $\kappa_{1}^{-}, \kappa_{2}^{-}$ are negative actions, $\de{c} \kappa_1^- \in \textbf{n}$ and $\de{c} \kappa_2^- \in \textbf{n}'$, then $\kappa_1^- = \kappa_2^-$.  
\end{lemme}

\begin{proof}
Let $\textbf{n} \in \N$. The negative actions in $\textbf{n}$ are $(-, \sigma.\overline{i} 0, \{ 1\})$ for $i=0,...,n-1$, so there does not exist two distinct negative actions with the same address. Remark also that two chronicles of $\textbf{n}$ are one an extension of the other, so they cannot differ on a negative action. 
\end{proof}
Which designs are the elements of $|\N^{\perp}| $? 

\begin{lemme}\label{Fsigma}

$| \N^{\perp} | =  \{\de{F}_{\sigma.\overline{0}} \}^{\da}  $, where $\de{F}_{\sigma. \overline{0}}$ is defined in the following way
\\\\ $\forall i \in \mathbb{N}$, $\de{F}_{\sigma.\overline{i} }:= \shortstack{    \shortstack{ $\hrulefill_{\da}$ \\     $\vdash \phantom{a}$ }   \shortstack{   $\de{F}_{\sigma.\overline{i+1}}$  \\\hrulefill\\  $\vdash \sigma.\overline{i}. 0$ }  \\\hrulefill\\  $ \sigma. \overline{i} \vdash $}$, and in terms of chronicles
\\\\ $\de{F}_{\sigma.\overline{i} }= \{  (-, \sigma. \overline{i}, \emptyset) \da$, $ (-, \sigma. \overline{i}, \{0\})(+, \sigma. \overline{i}. 0, \{1\} ) (-, \sigma. \overline{i+1}, \emptyset) \da$ $\, | \, \forall i \in \mathbb{N} \}$. 

\end{lemme}

\begin{proof}

\begin{varitemize}

\item We prove by induction on $n\in \mathbb{N}$ that  $\de{F}_{\sigma.\overline{0}}$ is orthogonal to all the elements $\textbf{n} \in \N$:

\begin{varitemize}  

\item  $\de{F}_{\sigma.\overline{0}} \perp   \textbf{0}$, because $\textbf{0}= \{ (+, \sigma, \emptyset) \}$ and $\de{F}_{\sigma.\overline{0}}$ contains the chronicle $(-, \sigma, \emptyset) \da$,

\item   if $\de{F}_{\sigma.\overline{0}} \perp \textbf{n}$, then $\de{F}_{\sigma.\overline{0}}$ contains the chronicle $\de{c}=w(-, \sigma.\overline{n}, \emptyset) \da$$ =(-, \sigma, \{ 0\}) $
\\$(+, \sigma.0 , \{1\})...(-, \sigma.\overline{n}, \emptyset) \da$. By definition of $\de{F}_{\sigma.\overline{0}}$ it also contains the chronicle $w (-, \sigma.\overline{n}, \{ 0\})  (+, \sigma.\overline{n}.0, \{1\}) (-, \sigma.\overline{n+1}, \emptyset) \da$ i.e. it is orthogonal to $\textbf{n\pmb{+}1}$. 

\end{varitemize} 
So $\de{F}_{\sigma.\overline{0}} \in \N^{\perp}$ and by definition of $\da$-shortening $\{ \de{F}_{\sigma.\overline{0}}\}^{\da}$ $\subseteq \N^{\perp}$. 

\item We prove by contradiction that $  \de{F}_{\sigma.\overline{0}}  \in |\N^{\perp}|$. If there exists $\de{F} \subsetneq  \de{F}_{\sigma. \overline{0}} $, s.t. $\de{F} \in \N^{\perp}$, that is $\de{F}_{\sigma.\overline{0}} \notin |\N|^{\perp}$, then there exists a chronicle $\de{c}$ s.t. $\de{c} \in  \de{F}_{\sigma. \overline{0}}$ and $\de{c} \notin \de{F}$. By definition of $ \de{F}_{\sigma. \overline{0}}$, either $\de{c}=(-, \sigma , \emptyset)\da$ or $\de{c}= (-, \sigma, \{0\})...(-, \sigma. \overline{n}, \emptyset)\da$ for some $n \in \mathbb{N}$ and, $\de{F}$ does not contain any $\da$-shorten of $\de{c}$. Then\footnotemark\footnotetext{If $\de{F}$ does not contain $(-, \sigma, \emptyset) \da$, then $\llbracket \de{F}, \textbf{0} \rrbracket \neq \{\da $$\}$. If $\de{F}$ does not contain $(-, \sigma, \{0\})...(-, \sigma. \overline{n}, \emptyset)\da$ for some $n \in \mathbb{N}$, then $\llbracket \de{F}, \textbf{n} \rrbracket \neq \{\da $$\}$} either $\de{F} \notperp \textbf{0}$ or $\de{F} \notperp \textbf{n}$, i.e., $\de{F} \notin \N^{\perp}$ (contradiction).\\
Therefore $   \de{F}_{\sigma.\overline{0}}  \in |\N^{\perp}|$. 

\item Furthermore an incarnation is closed by $\da$-shortening (by Lemma \ref{matdes}), thus $\{\de{F}_{\sigma. \overline{0}}\}^{\da} \subseteq |\N^{\perp}|$. 

\item Now we prove the second inclusion $| \N^{\perp} | \subseteq\{ \de{F}_{\sigma.\overline{0}}\}^{\da} $. 
\\ Let $\de{E} \in |\N^{\perp}$|, this means that $\de{E} \perp \textbf{n}$ for all $\textbf{n} \in \N$ and it is minimal w.r.t. inclusion. $\de{E} \perp \textbf{0}$, then the chronicle $(-, \sigma, \emptyset)\da$ belongs to $\de{E}$. $\de{E} \perp \textbf{n}$ for all $n>0$, then $\de{E}$ must contain the chronicles  $C=\{(-, \sigma, \{ 0\}) (+ \sigma.0, \{ 1\})... (-, \sigma.\overline{n}, \emptyset)\da$ |  $n>0\}^{\da}$. $\de{E}$ does not contain other chronicles apart $C \cup \{  (-, \sigma, \emptyset) \da$ $\}$, otherwise there would exist a design $\de{E}' \subsetneq \de{E}$, $\de{E}' \in \{\de{F}_{\sigma. \overline{0}}\}^{\da}$ s.t. $\de{E}' \in \N^{\perp}$, i.e., $\de{E}$ is not material in $\N^{\perp}$ (contradiction). Then the only chronicles of $\de{E}$ are $C \cup  \{  (-, \sigma, \emptyset), \da$ $\}$, i.e., $\de{E} \in  \{\de{F}_{\sigma. \overline{0}}\}^{\da}$.
\\Thus $|\N^{\perp}|  \subseteq \{\de{F}_{\sigma.\overline{0}}\}^{\da}  $.  
\\Therefore $\{\de{F}_{\sigma.\overline{0}} \}^{\da}$ $=| \N^{\perp}|$.

\end{varitemize}
\end{proof}
Besides the elements of $\N$ which correspond to the canonical terms of type $\mathbb{N}$, the behavior $\N^{\perp\perp}$ contains some elements which correspond to non canonical ones, but it contains also some designs which do not represent any term of type $\mathbb{N}$.  

\begin{example} \label{noncan}
The design $\de{D}= \shortstack{  \shortstack{$\hrulefill_{\emptyset}$\\ $\vdash \sigma.0.1$   \\\hrulefill\\ $\sigma.0 \vdash$}  \hspace{1em}  \shortstack{$\hrulefill_{\emptyset}$ \\$\vdash \sigma.1.1$   \\\hrulefill\\ $\sigma.1\vdash$}   \\\hrulefill\\ $\vdash \sigma$  }$ does not belong to $\N$, i.e., it does not represent a canonical term of type $\mathbb{N}$. $\de{D} \in \N^{\perp\perp}$, but it is not a net $\de{R}$ s.t. its normalization belongs to $\N$, i.e., it does not represent a non canonical term of type $\mathbb{N}$.
\end{example}

\begin{prop} \label{Nat} 
 $\N$ is principal, i.e., it is $\da$-free and $| \N^{\perp\perp} |= \N^{\da}$.
\end{prop}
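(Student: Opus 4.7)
The plan is to establish the two components of principality separately. First, I would note that $\da$-freeness of $\N$ is immediate from the inductive definition of $\textbf{n}_\sigma$, since the construction only uses proper actions of the form $(+, \sigma.\overline{i}, \{0\})$, $(+, \sigma.\overline{i}, \emptyset)$, and $(-, \sigma.\overline{i}.0, \{1\})$. The substantive work lies in proving $|\N^{\perp\perp}| = \N^{\da}$.

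For the inclusion $\N^{\da} \subseteq |\N^{\perp\perp}|$, I would show that each $\textbf{n}$ is material in $\N^{\perp\perp}$ and then invoke Lemma \ref{matset}. Since $\N \subseteq \N^{\perp\perp}$, materiality reduces to showing that no proper design $\de{D}' \subsetneq \textbf{n}$ lies in $\N^{\perp\perp}$. Using Lemma \ref{Fsigma}, it suffices to exhibit one element of $\N^{\perp}$ against which $\de{D}'$ fails, and $\de{F}_{\sigma.\overline{0}}$ is the natural choice: the interaction $\llbracket \textbf{n}, \de{F}_{\sigma.\overline{0}} \rrbracket$ walks down the unique linear chain of chronicles of $\textbf{n}$ until terminating with $\da$ at $(-, \sigma.\overline{n}, \emptyset)\da$. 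Since the chronicles of $\textbf{n}$ are totally ordered by prefix, any strict design subset must omit a final segment, and the interaction with $\de{F}_{\sigma.\overline{0}}$ then gets stuck at the first missing action, giving $\de{D}' \not\perp \de{F}_{\sigma.\overline{0}}$.

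For the reverse inclusion $|\N^{\perp\perp}| \subseteq \N^{\da}$, I would take $\de{D} \in |\N^{\perp\perp}|$ and analyze its structure using orthogonality with $\de{F}_{\sigma.\overline{0}}$ together with its $\da$-shortenings, which all lie in $|\N^{\perp}| \subseteq \N^{\perp}$ by Lemma \ref{Fsigma}. The first (positive) action of $\de{D}$ must be compatible with the rule at $\sigma$ in $\de{F}_{\sigma.\overline{0}}$, whose admissible ramifications are $\{0\}$ and $\emptyset$. By case analysis and induction on the depth: the case $\da$ forces $\de{D} = \{\da\}$, a $\da$-shortening of every $\textbf{n}$; the case $(+, \sigma, \emptyset)$ combined with materiality forces $\de{D} = \textbf{0}$; the case $(+, \sigma, \{0\})$ passes to the subdesign above $\sigma.0.1$, where the same analysis applies using $\de{F}_{\sigma.\overline{1}}$. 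Iterating, $\de{D}$ must agree with a $\da$-shortening of some $\textbf{n}$.

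The main obstacle is ruling out the extraneous branches exemplified in Example \ref{noncan}: even though such designs lie in $\N^{\perp\perp}$, they must be excluded from the incarnation. The crucial leverage is materiality (minimality with respect to inclusion): if $\de{D}$ contained any branch not mandated by orthogonality with some $\de{E} \in |\N^{\perp}|$, one could remove it and still produce a design in $\N^{\perp\perp}$, contradicting materiality. Making this precise — by showing that at each stage the only chronicles of $\de{D}$ forced by orthogonality with the family $\{\de{F}_{\sigma.\overline{i}}\}^{\da}$ are those appearing in some $\textbf{n}^{\da}$ — is the technical heart of the argument, and parallels the materiality argument already used in the proof of Lemma \ref{Fsigma}.
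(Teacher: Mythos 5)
Your proposal follows the same route as the paper: $\da$-freeness from the definition, the inclusion $\N^{\da}\subseteq|\N^{\perp\perp}|$ via materiality of each $\textbf{n}$ tested against $\de{F}_{\sigma.\overline{0}}$ followed by Lemma \ref{matset}, and the converse inclusion via orthogonality with $\de{F}_{\sigma.\overline{0}}$ combined with materiality. The one step you explicitly leave open --- excluding the extraneous branches of Example \ref{noncan} from the incarnation --- is closed in the paper by a short observation rather than by the induction-and-pruning argument you sketch: since $\de{D}\perp\de{F}_{\sigma.\overline{0}}$ and the interaction must end on a daimon, the chronicle $\de{c}'$ of $\de{D}$ visited by this interaction is either the full maximal chronicle of some $\textbf{n}$ or a prefix of it ended by $\da$; hence the design $\{\de{c}'\}$ it generates already lies in $\N^{\da}\subseteq\N^{\perp\perp}$ (Lemma \ref{Edai}). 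Since the incarnation of $\de{D}$ is the intersection of all its subdesigns belonging to $\N^{\perp\perp}$, materiality gives $\de{D}\subseteq\{\de{c}'\}$, so $\de{D}=\{\de{c}'\}\in\N^{\da}$ at once. Your contrapositive formulation (``remove the extraneous branch and stay in $\N^{\perp\perp}$'') needs exactly this fact --- that what remains after pruning is still in the behaviour --- which is the point to supply; once you have it, no induction on depth is required.
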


\begin{proof}
\begin{varitemize}

\item The fact that $\N$ is $\da$-free follows from the definition of elements of $\N$.

\item We prove by contradiction that $\N \subseteq  | \N^{\perp\perp} |$. \\
Let $\textbf{n} \in \N$, suppose that $\textbf{n} \notin |\N^{\perp\perp}|$, i.e., there exists $\de{E} \subsetneq \textbf{n}$ s.t. $\de{E} \in \N^{\perp\perp}$. Since $\textbf{n} $ contains only one maximal chronicle $\de{c}$, $\de{E}$ contains only one maximal chronicle $\de{c}'$ that is an initial prefix of $\de{c}$. 

\begin{varitemize}

\item If $\textbf{n} = \textbf{0}$, then $\textbf{n}= \{ (+, \sigma, \emptyset) \}$, then $\de{E} = \textbf{n}$. But $\de{E} \subsetneq \textbf{n}$ (contradiction). 

\item Otherwise ($\textbf{n} \neq \textbf{0}$) $\de{c}= \{ (+, \sigma, \{ 0\} ) (-, \sigma. 0 , \{ 1\})...(+, \sigma. \overline{n}, \emptyset) \} $ and since maximal chronicles end with a positive action there exists some $n' < n$ s.t. $\de{c}' = \{ (+, \sigma, \{ 0\} ) (-, \sigma. 0 , \{ 1\})...(+, \sigma. \overline{n}', \{ 0\} ) \} $. We consider the design $\de{F}_{\sigma.\overline{0}}$ defined in Lemma \ref{Fsigma}.\\
 $\de{F}_{\sigma.\overline{0}} \in \N^{\perp}$ and $\de{F}_{\sigma.\overline{0}}  \notperp \de{E}$, indeed $\de{E} = \{ \de{c}'\} $ and $\de{F}_{\sigma.\overline{0}}$ does not contains the chronicle $\widetilde{\de{c}'}$. This means that $\de{E} \notin  \N^{\perp\perp}$. But we supposed $\de{E} \in  \N^{\perp\perp}$ (contradiction).\\
 Then $\N \subseteq |\N^{\perp\perp}|$. 
\end{varitemize} 
Therefore from Lemma \ref{matset} we obtain $ \N^{\da}   \subseteq |\N^{\perp\perp}|$.

\item Now we prove the second inclusion $ |\N^{\perp\perp}| \subseteq  \N^{\da} $. If $\de{D} \in | \N^{\perp\perp}| $ in particular $\de{D} \in \N^{\perp\perp}$, then it is orthogonal to all the elements of $\N^{\perp}$. From Lemma \ref{Fsigma} we have that $ \de{F}_{\sigma.\overline{0}}^{\da}$ $ = |\N^{\perp} |$, then $\de{D} \perp \de{F}_{\sigma. \overline{0}}$. This means that $\de{D}$ contains a chronicle $\de{c}'$ that is a prefix of the chronicle $(+, \sigma, \{0\})  (-, \sigma0 , \{1\})...$
\\$(+, \sigma. \overline{n}, \emptyset) $ (for some $n \in \mathbb{N}$) maybe ended by $\da$. Note that $\{ \de{c}'\} \in \N^{\perp\perp}$. Moreover $\de{D}$ is material in $\N^{\perp\perp}$ i.e. if there exists $\de{E} \subsetneq \de{D}$ s.t. $\de{E} \in \N^{\perp\perp}$, then $\de{E}= \de{D}$. Then $\de{c}'$ is the only chronicle of $\de{D}$ and by definition of $\N$, $\de{D} \in \N^{\da}$. Then $   | \N^{\perp\perp}|    \subseteq   \N^{\da} $. 

\end{varitemize}

Therefore $ \N^{\da} =  |\N^{\perp\perp}| $. 
 \end{proof}

\subsection{Lists}\label{List}
Suppose $A$ a type, $\underline{0}=\epsilon$, $\underline{i+1}:= \underline{i}.1.1$ and $\de{A}^{a_{1}} _{\xi.0.1 }$ the design, based on $\vdash \xi.0.1$, that  represents the element $a_{1}$. We define $\de{D}^{ <a_{1},...,a_{n}>}_{ \xi}$ which represents the list $<a_{1},...,a_{n}>$ (of canonical elements of $A$), on the base $\vdash \xi$, as follows:
\\ \scalebox{.9}{$\de{D}^{\epsilon}_{\xi}=  \shortstack{ $\hrulefill_{\emptyset}$ \\ $\vdash \xi$ }$} is the empty list, \scalebox{.9}{$\de{D}^{<a_{1},...,a_{n}>}_{\xi} = \shortstack{     \shortstack{ $\de{A}^{a_{1}}_{\xi.0.1}$  \\\hrulefill\\ $\xi.0 \vdash$   }     \hspace{1em}  \shortstack{  $\de{D}^{<a_{2},...,a_{n}>}_{\xi.\underline{1}}$ \\\hrulefill\\ $\xi.1 \vdash $ }     \\\hrulefill\\ $  \vdash  \xi$} $} $\forall n>0$.
\\\\We denote with $\Ln$ the set of designs that represent lists of length $n$ of natural numbers, i.e., $\Ln = \{ \de{D}^{ <a_{1},...,a_{n}>}_{ \xi} | a_1,...,a_n \in \mathbb{N} \} $. We just defined the canonical terms of $(\Ln)^{\perp\perp}$.

 We can generalize some results that we have proved for $\N$ to the case of $\Ln$.

\begin{lemme}\label{oppchLn} 
For all $\de{D} \in \Ln$ if $\de{c}$ is a chronicle of $\de{D}$ then $\widetilde{\de{c}}$ is a chronicle.
\end{lemme}

\begin{proof}
Similar as for Lemma \ref{oppch}.
\end{proof}

\begin{prop}\label{chLn} 

Let $\de{D},\de{D'}$ be two elements of $\Ln$ and $\de{c}$ a chronicle of $\de{D}$, then one of the following holds:

\begin{varitemize}  

\item $\de{c} \in \de{D'}$ or 

\item  $\exists i,j \in \mathbb{N}$, $\exists \de{c}'$ s.t. either ($\de{c}' (+, \xi.\underline{i}.0.1.\overline{j}, \{ 0\} ) \preccurlyeq \de{c}$ and $\de{c}' (+, \xi.\underline{i}.0.1.\overline{j}, \emptyset)  \in \de{D}'$) or ($\de{c}= \de{c}' (+, \xi.\underline{i}.0.1\overline{j}, \emptyset)$  and $\de{c}' (+, \xi.\underline{i}. 0.1.\overline{j}, \{ 0\} ) \in \de{D}'$).

\end{varitemize} 

\end{prop}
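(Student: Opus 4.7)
The plan is to prove the statement by induction on $n$, the common length of the two lists, reducing the element-position case to Proposition~\ref{chNat}.

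For the base case $n=0$, both $\de{D}$ and $\de{D}'$ equal $\de{D}^{\epsilon}_{\xi}=\{(+,\xi,\emptyset)\}$, so any chronicle $\de{c}$ of $\de{D}$ trivially belongs to $\de{D}'$. For the inductive step, write $\de{D}=\de{D}^{<a_{1},\ldots,a_{n}>}_{\xi}$ and $\de{D}'=\de{D}^{<b_{1},\ldots,b_{n}>}_{\xi}$. Every chronicle $\de{c}$ of $\de{D}$ begins with the action $(+,\xi,\{0,1\})$. If $\de{c}$ is just this single action then it also belongs to $\de{D}'$ and we are done; otherwise $\de{c}$ continues with either $(-,\xi.0,\{1\})$ (descending into the first element) or $(-,\xi.1,\{1\})$ (descending into the tail).

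In the element case, the remainder of $\de{c}$ is a chronicle of the natural-number design $\de{A}^{a_{1}}_{\xi.0.1}$. Apply Proposition~\ref{chNat} with its base $\vdash \sigma$ relocated to $\vdash \xi.0.1$: either this remainder is also a chronicle of $\de{A}^{b_{1}}_{\xi.0.1}$, in which case $\de{c} \in \de{D}'$; or the two natural-number chronicles first differ on positive actions at an address of the form $\xi.0.1.\overline{j}$, which (after prefixing by $(+,\xi,\{0,1\})(-,\xi.0,\{1\})$) gives the conclusion with $i=0$, since $\underline{0}=\epsilon$. In the tail case, the remainder of $\de{c}$ is a chronicle of the shorter list design $\de{D}^{<a_{2},\ldots,a_{n}>}_{\xi.1.1}$. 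Invoking the induction hypothesis, either this remainder lies in $\de{D}^{<b_{2},\ldots,b_{n}>}_{\xi.1.1}$ (so $\de{c} \in \de{D}'$), or there exist $i',j$ such that the tail chronicles first split on positive actions at an address $\xi.1.1.\underline{i'}.0.1.\overline{j}$. Since $\xi.1.1.\underline{i'}=\xi.\underline{i'+1}$ by the definition of the iterator $\underline{\cdot}$, this yields the desired conclusion with $i=i'+1$.

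The only real obstacle is the address-arithmetic bookkeeping: one must verify the identity $\xi.1.1.\underline{i'}=\xi.\underline{i'+1}$ and check that the two sub-cases of Proposition~\ref{chNat} (namely, the prefix ending in $(+,\cdot,\{0\})$ versus in $(+,\cdot,\emptyset)$) lift correctly through the prefix $(+,\xi,\{0,1\})(-,\xi.\underline{i},\{1\})$. Both are immediate once one writes the indices out, so the proof is essentially a structural unpacking of the list representation combined with Proposition~\ref{chNat}.
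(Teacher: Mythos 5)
Your proof is correct and follows essentially the same route as the paper's: the paper performs a direct case analysis on whether $\de{c}$ lies along the common ``spine'' of the list design or descends into the element at some position $i$, and then invokes Proposition~\ref{chNat} for the element comparison, which is exactly what your head/tail induction unfolds to. Your inductive packaging, with the explicit check that $\xi.1.1.\underline{i'}=\xi.\underline{i'+1}$, is if anything slightly more careful about the address bookkeeping than the paper's flat enumeration over positions.
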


\begin{proof}
Let $\de{D}$ and $\de{D'}$ represent respectively the lists $<a_{0},...,a_{n-1}>$ and $<a_{0}',...,a_{n-1}'>$ and $\de{c} \in \de{D}$. 
\begin{enumerate}

\item If $\de{c}$ is a prefix of the chronicle which represents the empty list with last action $(+, \xi\underline{n}, \emptyset)$, then this chronicle is common to all the elements of $\Ln$, so $\de{c} \in \de{D'}$.

\item If $\de{c}$ is a prefix of the chronicle which represents an element $a_{i}$ of the list. 
\\If $a_{i}=a_{i}'$ then $\de{c} \in \de{D'}$.
\\If $a_{i} \neq a_{i}'$ there are two cases:

\begin{varitemize}    

\item if $a_{i} > a_{i}'$ let $\de{c}'$ be the subchronicle of $\de{c}$ which ends with the action $(-, \xi.\underline{i}.0.1.\overline{a_{i}' -1}.0 , \{ 1\})$ .\\ 
Then $\de{c}' (+, \xi.\underline{i}.0.1.\overline{a_{i}'}  , \emptyset) \in \de{D'}$ ( it follows from Proposition \ref{chNat} and the fact that $a_{i}'$ is represented in the position $i$ in $\de{D'}$). 

\item  if $a_{i}' > a_{i}$ then

\begin{varitemize}  

\item if $\de{c}$ is not maximal in $\de{D}$, i.e., it does not end with the action $(+, \xi.\underline{i}.0.1.\overline{a_{i}}, \emptyset )$, then $\de{c} \in \de{D}'$ (from Proposition \ref{chNat}),

\item otherwise $\de{c}$ ends with the action $(+, \xi.\underline{i}.0.1.\overline{a_{i}}, \emptyset )$. Let $\de{c}'$ be $\de{c}$ without its last action, i.e., $\de{c}=\de{c}'  (+, \xi.\underline{i} .0.1.\overline{a_{i}}, \emptyset)$ and $\de{c}' (+, \xi.\underline{i}.0.1.\overline{a_{i}}, \{ 0\}) \in \de{D'}$.   

\end{varitemize}

\end{varitemize}

\end{enumerate}
\end{proof}
%
%
The chronicles of the elements of $\Ln$ have a particular form, what about the paths of $\Ln$? 

\begin{lemme} \label{chF}  
If $\de{c}$ is a chronicle of $\de{E} \in \Ln$ and a subsequence of a path of  another design $\de{F} \in \Ln$, then $\de{c}$ is a chronicle of $\de{F}$.

\end{lemme}

\begin{proof}
It follows from Proposition \ref{chLn}.
\end{proof}

\begin{lemme} \label{kpos}
Let  $\de{E,F} \in \Ln$, let $\de{q}$ be a path of $\de{E}$ and $\de{F}$, let $\kappa$ an action such that $\de{q} \kappa$ is a path of $\de{E}$ but not of $\de{F}$. Then $\kappa$ is positive.

\end{lemme}

\begin{proof}
 Let $q$ be a path of $\de{E}$ and $\de{F}$, $q \kappa$ a path of $\de{E}$ and not of $\de{F}$. We prove by contradiction that $\kappa$ is positive. Suppose that $\kappa$ is negative, then $\view{ \de{q} \kappa} = \view{ \de{q}_1} \kappa_0^+ \kappa$, where $\kappa_{0}^{+}$ justifies $\kappa$, $\view{ \de{q}_1 \kappa_0^+ }$  is a chronicle $\de{c}_1$ of $\de{E}$, $\de{c}_1 $ is a subsequence of $\de{q}$ and $\de{q}$ is a path of $\de{F}$. Then from Lemma \ref{chF}, $\de{c}_1$ is a chronicle of $\de{F}$. The justifier of $\kappa$ is the last action of $\de{c}_1$, in $\Ln$ there are never two distinct negative actions on the same address, then $\de{c}_1 \kappa$ is a chronicle of $\de{F}$. Moreover $\view{ \de{q} \kappa} =  \de{c}_1 \kappa \in \de{F}$ and $\de{q}$ is a path of $\de{F}$, then $\de{q} \kappa$ is a path of $\de{F}$ (contradiction). Therefore $\kappa$ must be positive.
\end{proof}

\begin{prop} \label{pathLn} 
Let $\de{E ,F}$ be two elements of $\Ln$ and $\de{p}$ a path of $\de{E}$. Then one of the following holds:

\begin{varitemize}  
\item  $\de{p}$ is a path of $\de{F}$

\item there exists two positive actions $\kappa, \kappa'$ on the same address, $\exists i \in \mathbb{N}$ s.t. for some prefix $\de{q}$ of $\de{p}$, $\de{q} \kappa$ is a prefix of $\de{p}$, $\de{q} \kappa' $ is a path of $\de{F}$, and either ($\kappa= (+, \xi.\underline{i}.0.1.\overline{a_{i}}, \emptyset)$ and $\kappa '= (+, \xi.\underline{i}.0.1.\overline{a_{i}}, \{ 0\})$) or ($\kappa = (+, \xi.\underline{i}.0.1. \overline{a_{i}}, \{ 0\})$ and $\kappa' = (+, \xi.\underline{i}.0.1.\overline{a_{i}}, \emptyset)$). 

\end{varitemize}

\end{prop}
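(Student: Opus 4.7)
The plan is to proceed by induction on the length of the path $\de{p}$. The empty path is trivially a path of $\de{F}$, so the first alternative holds; this handles the base case. For the inductive step, write $\de{p} = \de{p}_0 \kappa$ and apply the inductive hypothesis to $\de{p}_0$. If $\de{p}_0$ already satisfies the second alternative with some prefix $\de{q}$ and actions $\kappa_{\mathrm{div}}, \kappa_{\mathrm{div}}'$, then exactly the same data works for $\de{p}$, since $\de{q}\kappa_{\mathrm{div}}$ remains a prefix of $\de{p}_0\kappa$. So assume $\de{p}_0$ is a path of $\de{F}$. If $\de{p}_0\kappa$ is also a path of $\de{F}$ we are done; otherwise we are in the main case.

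In the main case, Lemma \ref{kpos} forces $\kappa$ to be positive. Consider the chronicle $\de{c} := \view{\de{p}_0\kappa}$ of $\de{E}$. The first observation I would use is that all the sequence-intrinsic path conditions (alternation, linearity, justification, negative jump, daimon, totality) are satisfied for $\de{p}_0\kappa$ because it is a path of $\de{E}$, and for the candidate status in $\de{F}$ one only needs that the views of prefixes are chronicles of $\de{F}$. Since $\de{p}_0$ is a path of $\de{F}$, the views of all proper prefixes already live in $\de{F}$, so if $\de{c}$ itself were a chronicle of $\de{F}$, then $\de{p}_0\kappa$ would be a path of $\de{F}$, contradicting our assumption. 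Hence $\de{c} \notin \de{F}$, and Proposition \ref{chLn} applied to the pair $(\de{E},\de{F})$ and the chronicle $\de{c}$ must produce a splitting with some $i$, some exponent, and positive actions on the same address with ramifications $\emptyset$ and $\{0\}$.

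The main obstacle, and the step that closes the argument, is pinpointing that this divergence occurs exactly at the last action $\kappa$ of $\de{c}$. If the splitting position in Proposition \ref{chLn} were strictly below $\kappa$, then some positive action $(+, \xi.\underline{i}.0.1.\overline{j}, \{0\})$ or $(+, \xi.\underline{i}.0.1.\overline{j}, \emptyset)$ would appear in $\view{\de{p}_0}$, hence as one of the actions of $\de{p}_0$ itself, while $\de{F}$ would contain the complementary positive action at the same address. This would immediately contradict the fact that $\de{p}_0$ is a path of $\de{F}$ (the corresponding prefix of $\de{p}_0$ would have a view not lying in $\de{F}$). Therefore the divergence is at $\kappa$: either $\kappa = (+, \xi.\underline{i}.0.1.\overline{a_i}, \emptyset)$ and $\kappa' := (+, \xi.\underline{i}.0.1.\overline{a_i}, \{0\})$, or vice versa, with $\view{\de{p}_0}\kappa' \in \de{F}$.

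It remains to set $\de{q} := \de{p}_0$ and check that $\de{q}\kappa'$ is a path of $\de{F}$: this is the same intrinsic-condition argument used above, applied to $\de{q}\kappa'$ instead of $\de{q}\kappa$. Since $\view{\de{q}\kappa'} \in \de{F}$ and $\de{q}$ is a path of $\de{F}$, the sequence $\de{q}\kappa'$ is a path of $\de{F}$, which establishes the second alternative of the proposition and completes the induction.
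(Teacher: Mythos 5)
Your proof is correct and follows essentially the same route as the paper's: reduce to the first point where $\de{p}$ fails to be a path of $\de{F}$ (your induction is just a reformulation of that), use Lemma \ref{kpos} to get positivity of the offending action, pass to the view and apply Proposition \ref{chLn}, then read off the two complementary actions $(+,\xi.\underline{i}.0.1.\overline{a_i},\emptyset)$ and $(+,\xi.\underline{i}.0.1.\overline{a_i},\{0\})$. If anything, you are slightly more careful than the paper in arguing that the divergence supplied by Proposition \ref{chLn} must sit exactly at the last action $\kappa$ rather than strictly earlier in the chronicle.
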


\begin{proof}
If $\de{p}$ is a path of $\de{F}$ there is nothing to prove, so let $\de{p} \notin \de{F}$. 
\\If $n=0$ the only element of $\Ln$ is the empty list, then $\de{E}=\de{F}$ and $\de{p}$ is a path of $\de{F}$ (as above). 
\\If $n \neq 0$,  $\de{E} \neq \de{F}$  (otherwise is like the first case) and $\de{p}$ is not a path of $\de{F}$, there exists an action $\kappa$ such that  $\de{q} \kappa$ is a prefix of $\de{p} $, $\de{q}$ is a path of $\de{F}$ and $\de{q} \kappa$ is not  a path of $\de{F}$. $\de{q}$ is not empty because $n\neq 0$ and then all the chronicles of the elements of $\Ln$ start with the same positive action $(+, \xi,\{0,1\})$, so all paths of $\de{E}$ and $\de{F}$ have at least their first action in common. Then $\kappa$ is not an initial action. From Lemma \ref{kpos}, $\kappa$ is a positive proper (because the elements of $\Ln$ are $\da$-free) action. $\view{ \de{q} \kappa}$ is a chronicle $\de{c}_1$ of $\de{E}$ which ends with a positive action, then from Proposition \ref{chLn} there exists $ \de{c}_1'$ s.t. either ($\de{c}_1'   (+, \xi. \underline{i}.0.1.\overline{a_{i}}, \{0\})$ is a subsequence of $\de{c}_{1}$ and $\de{c}_1' (+, \xi.\underline{i}.0.1.\overline{a_{i}}, \emptyset) \in \de{F}$) or ($\de{c}_{1}=\de{c}_1' (+, \xi. \underline{i}.0.1.\overline{a_{i}}, \emptyset)$ and $\de{c}_1' (+, \xi.\underline{i}.0.1.\overline{a_{i}}, \{0\})  \in \de{F})$. Indeed $\de{c}_1$ is not a chronicle of $\de{F}$, otherwise $\de{q} \kappa$ would be a path of $\de{F}$. The thesis follows taking $\de{q}=\de{c}_{1}'$.
\end{proof}
The duals of the paths of $\Ln$ are still paths as proved in the following lemma. 

\begin{lemme} \label{oppathLn} 
If $\de{p}$ is a path of $\Ln$ then $\widetilde{\de{p}}$ is chronicle, hence a path. 
\end{lemme}

\begin{proof}
If $\de{p}$ is a path of the empty list, then $\de{p} = (+, \xi, \emptyset)$, $\widetilde{\de{p}}= (-, \xi, \emptyset) \da$ and $\widetilde{\de{p}}$ is still a path.Otherwise $\de{p}$ has a particular form $\de{p}=(+, \xi, \{0,1 \}) \kappa'_1 \kappa_1  \kappa'_2 \kappa_2....$ where $\kappa'_i$ is negative and justifies $\kappa_i$. When we change the polarities to obtain $\widetilde{\de{p}}$ we find that each negative (non initial) action of $\widetilde{\de{p}}$ is justified by the action which immediatly precedes it in $\widetilde{\de{p}}$. This means that $\widetilde{\de{p}}$ is a chronicle. 
\end{proof} 
Are the paths of $\Ln$ all visitable in $\Ln$? The following Lemma answers affirmatively to this question. 

\begin{lemme} \label{vispathLn} 
All the paths of $\Ln$ are visitable in $\Ln$. 
\end{lemme}

\begin{proof}
Let $\de{p}$ be a path of $ \Ln$, $w \kappa^-$ a prefix of $\de{p}$ such that there exists $ \de{D} \in \Ln$ and $w$ is a path of $\de{D}$. There exists $\de{E} \in \Ln$ s.t.  $w \kappa^-$ is a path of $\de{E}$. If $w \kappa^-$ is not a path of $\de{D}$, from Lemma \ref{kpos} $\kappa^-$ would be positive (contradiction). Therefore $w \kappa^-$ is a path of $\de{D}$. This means that $\de{p}$ is visitable.
\end{proof}

\begin{lemme}\label{pathcov}
Given a design $\de{E} \in \Ln$, there exists a path $\de{p}$ which covers all the actions of $\de{E}$.
\end{lemme}

\begin{proof}
Let $C= \{\de{c}_{1},...,\de{c}_{n+1} \}$  be an enumeration of the maximal chronicles of $\de{E}$. Given two distinct elements $\de{c}_{i}\neq \de{c}_{j}$ of $C$ they start differ on a negative action $\kappa_{ij}$ of $\de{c}_j$,
we denote as $\de{c}_j'$ the rest of $\de{c_j}$ after $\kappa_{ij}$, i.e., $\de{c}_j= w \kappa_{ij} \de{c}_{j}' $. We define a sequence of actions $\de{p}$ as $\de{c}_{1} \kappa_{12} \de{c}_{2}' \kappa_{23}\de{c}_{3}'....\de{c}_{m}'$. The idea is to jump from a chronicle to the other starting from the first action they differ on. By definition the sequence $\de{p}$ covers all the actions of $\de{E}$. Now we prove that $\de{p}$ is a path of $\de{E}$: by construction, it is alternated and it holds linearity, daimon and totality (see Definition \ref{path}). Suppose that $\de{q} \kappa^{+}$ is a prefix of $\de{p}$, then either $\kappa^{+}$ is initial or there exists a negative action $\kappa_0^{-}$ which justifies $\kappa^{+}$. In the latter case $\kappa_0^{-}$ is immediately before $\kappa^{+}$, i.e., it is the last action of $\de{q}$. Then by definition of view, $\kappa_0^{-} \in \view{q}$. Thus $\de{p}$ is a path. 
\end{proof}
For each $\de{D} \in \Ln$ there are several paths which cover all the actions of $\de{D}$. These paths only differ on the order of their actions. Are they coherent? Yes, indeed all paths that belong to the same design are pairwise coherent.

 \begin{lemme}\label{cohpath} 
Let $\de{E}, \de{F}$ be two distinct designs of $\Ln$, let $\de{p}$ a path (resp. $\de{q}$) that covers $\de{E}$ (resp. $\de{F}$) (following the same order to visit their chronicles), then $\de{p}$ and $\de{q}$ are not coherent whereas $\widetilde{\de{p}}$ and $\widetilde{\de{q}}$ are coherent. 
\end{lemme}

\begin{proof}
Let $\de{E}$ and $\de{F}$ respectively represent the distinct lists $<a_1,...,a_n>$ and $<a'_1,...,a'_n>$, then there exists $i\inÊ\{ 1,...,n\}$ such that $a_i \neq a'_i$. Having seen the structure of the elements of $\Ln$, there exists a chronicle $\de{c} \in \de{E, F}$ such that $\de{c} (+, \xi.\underline{i}.0.1.\overline{a_{i}}, \emptyset) \in \de{E}$ and $\de{c} (+, \xi. \underline{i}.0.1.\overline{a_{i}},\{ 0\}) \in \de{F}$ (or viceversa). $\de{p}$ and $\de{q}$ cover all the actions of $\de{E}$ and $\de{F}$, this means that there exists a subsequence $w_1 \kappa^{+}_1$ of $\de{p}$ and a prefix $w_{2} \kappa_{2}^{+}$ of $\de{q}$ s.t. $\view{w_{1}} =\view{w_{2}} =\de{c}$ and $\kappa_{1}^{+} \neq \kappa_{2}^{+}$, in particular $\kappa_{1}^{+} = (+, \xi.\underline{i}.0.1.\overline{a_{i}}, \emptyset)$ and $\kappa_{2}^{+} = (+, \xi.\underline{i}.0.1.\overline{a_{i}}, \{0\})$ (or viceversa). Therefore $\de{p}$ and $\de{q}$ are not coherent.  $\de{p}$ and $\de{q}$ start differ on a positive action on the same address, then $\widetilde{\de{p}}$ and $\widetilde{\de{q}}$ start differ on a negative action on the same address, moreover $\widetilde{\de{p}}$ and $\widetilde{\de{q}}$ are chronicles (from the proof of Lemma \ref{oppathLn}). Therefore $\widetilde{\de{p}}$ and $\widetilde{\de{q}}$ are coherent. 
\end{proof}

\begin{rem} \label{difford}
If $\de{p}$ and $\de{q}$ are two paths which cover all the actions of  a design $\de{E}$ visiting its chronicles following two different orders, then  $\de{p}$ and $\de{q}$ start differ on a negative action. Then their duals start differ on a positive action i.e. $\widetilde{\de{p}}$ and $\widetilde{\de{q}}$ are not coherent.

\end{rem}

\begin{lemme} \label{visLnperp} 

 $\widetilde{  V_{(\Ln)^{\perp} }   }  = V_{ \Ln}$.
 
\end{lemme}

\begin{proof}

Given a permutation $\beta $ of $1,...,n+1$ we define the set $P_{\beta }$ of paths as
\bdm
P_{\beta } = \{ \widetilde{\de{p}}\, |\, \exists \de{E} \in \Ln  \text{ s.t. $\de{p}$ covers all the actions of }  \de{E} \text{ following the order\footnotemark  given by }  \beta  \}.
\edm
\footnotetext{The order followed by the elements of $P_{\beta }$ is a permutation of $\{1,...,n+1 \}$ indeed there are at most $n+1$ maximal chronicles of $\de{E}$.}
It follows from Lemma \ref{cohpath} that elements of $P_{\beta }$ are pairwise coherent. Thus, as a set of pairwise coherent paths forms a design, we can define the design $\de{G}_{\beta }= \view{ \view{ P_{\beta } }  }$, i.e., the set of views of prefixes of the elements of $P_{\beta }$.
Let $G= \{ \de{G} \, | \, \beta$ is a permutation of $1,...,n+1\}$.
By Lemma \ref{vispathLn}, we know that the paths of $\Ln$ are visitable. Furthermore, in $\Ln$, the ramification of a negative action is always the singleton $\{1\}$. Hence for all $\beta$, a positive action of $\de{G}_{\beta }$ is followed by at most one negative action: thus it is not possible to ``jump'' from a chronicle to another, i.e., the only paths of $G$ are its chronicles, which are visitable.
The set of chronicles of $G$ is equal to $\widetilde{V_{\Ln}}$ by definition of $G$.
It follows that $V_{G}= \widetilde{V_{\Ln}}$.
We want to show now that $G= |(\Ln)^{\perp}|$, from which follows that $V_{G}=V_{(\Ln)^{\perp}}$:\\
-- Let $\de{G}_{\beta } \in G$. By definition of $G$, $\de{G} \in (\Ln)^{\perp}$. We prove by contradiction that it is material in it. Suppose that there exists $\de{E} \subsetneq \de{G}_{\beta }$, then there exists a path $\de{p}$ which belongs to $\de{G}_{\beta }$ and $\de{p} \notin \de{E}$ s.t. $\de{p}$ covers a design $\de{L}\in \Ln$. $\de{E} \subsetneq \de{G}_{\beta }$, then $\de{E}$ cannot contain a prefix of $\de{p}$ ended by $\da$. This means that $\de{E}\notperp \de{L}$, i.e. $\de{E} \notin (\Ln)^{\perp}$. Thus $G \subseteq |(\Ln)^{\perp}|$. \\
-- If $\de{F} \in |(\Ln)^{\perp}|$, then $\de{F}$ is composed with pairwise coherent paths $\widetilde{\de{q}}$ s.t. $\de{q} \in V_{\Ln}$. This means that $\de{F}$ contains the duals of the paths which cover distinct elements of $\Ln$ following the same order (Lemma~\ref{cohpath}), i.e. $\de{F} \in G$. Thus $|(\Ln)^{\perp}| \subset G$. \\
Therefore $|(\Ln)^{\perp}|=G$.
\end{proof}

\begin{prop}
$\Ln$ is principal, i.e., it is $\da$-free and $|  (\Ln)^{\perp\perp} | =   (\Ln)^{\da}$. 
\end{prop}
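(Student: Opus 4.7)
The plan is to mirror the proof strategy used for $\N$ in Proposition \ref{Nat}, replacing the concrete design $\de{F}_{\sigma.\overline{0}}$ with the characterization of $|(\Ln)^{\perp}|$ established in Lemma \ref{visLnperp}. First, $\da$-freeness follows directly from the inductive definition: the only action introduced in $\de{D}^{\epsilon}_{\xi}$ and in the recursive step is either a proper positive/negative action or the $\emptyset$-ending of the empty list, never the daimon; since $\de{A}^{a_i}$ is itself $\textbf{n}_\eta$ for some address $\eta$, which is $\da$-free by the proof of Proposition \ref{Nat}, the whole design is $\da$-free.

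Next, I would establish $\Ln \subseteq |(\Ln)^{\perp\perp}|$. Suppose by contradiction that some $\de{D} \in \Ln$ is not material in $(\Ln)^{\perp\perp}$: there is $\de{E} \subsetneq \de{D}$ with $\de{E} \in (\Ln)^{\perp\perp}$. Then there is a (positive-ending) chronicle $\de{c} \in \de{D} \setminus \de{E}$. Take a path $\de{p}$ of $\de{D}$ covering $\de{D}$ (Lemma \ref{pathcov}) and containing $\de{c}$ as a sub-view. By Lemma \ref{oppathLn} $\widetilde{\de{p}}$ is a chronicle, and by Lemma \ref{vispathLn} $\de{p} \in V_{\Ln}$, so $\widetilde{\de{p}} \in \widetilde{V_{\Ln}} = V_{(\Ln)^{\perp}}$ (after taking duals, using Lemma \ref{visLnperp}). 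The design $\de{G} = \view{\view{\widetilde{\de{p}}}}$ belongs to $|(\Ln)^\perp|$, is orthogonal to $\de{D}$ (the interaction unfolds $\de{p}$ down to its $\da$) but not to $\de{E}$ (the missing chronicle $\de{c}$ prevents the $\da$-step), contradiction. Then Lemma \ref{matset} lifts this to $(\Ln)^{\da} \subseteq |(\Ln)^{\perp\perp}|$.

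For the reverse inclusion $|(\Ln)^{\perp\perp}| \subseteq (\Ln)^{\da}$, I would apply Proposition \ref{compinc}: a material design of $(\Ln)^{\perp\perp}$ is obtained as the set of views of prefixes of a maximal finite-stable and saturated clique $\widetilde{C'} \subseteq \widetilde{V_{(\Ln)^{\perp}}}$, which by Lemma \ref{visLnperp} means $C'$ is a maximal such clique in $V_{\Ln}$. By Lemma \ref{cohpath} and Remark \ref{difford}, two covering paths of distinct elements of $\Ln$ are incoherent, so a maximal coherent clique in $V_{\Ln}$ must consist of paths that all live in a single $\de{D} \in \Ln$ (possibly truncated by $\da$), and saturation forces it to contain every visitable continuation of $\de{D}$. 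Dualising and taking views of prefixes recovers exactly a $\da$-shortening of $\de{D}$, i.e.\ an element of $(\Ln)^{\da}$.

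The main obstacle will be the third step: making rigorous the claim that the only maximal finite-stable saturated cliques of $V_{\Ln}$ correspond to single designs of $\Ln$ (up to $\da$-shortening). This hinges on the peculiar ramification pattern of $\Ln$ (top action has ramification $\{0,1\}$, and every subsequent negative action has singleton ramification $\{1\}$), which guarantees that paths cannot ``jump'' between maximal chronicles, so saturation acts locally on each chronicle and coherence excludes mixing different list representations. Once this structural observation is made precise, both inclusions combine to give $|(\Ln)^{\perp\perp}| = (\Ln)^{\da}$, completing the proof that $\Ln$ is principal.
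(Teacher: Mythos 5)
Your overall architecture is sound, and your third step is essentially the paper's own argument (Proposition \ref{compinc} combined with Lemma \ref{visLnperp}, Lemma \ref{cohpath} and Remark \ref{difford}; the structural point you flag as the "main obstacle" is exactly what the paper records, namely that singleton ramifications forbid jumps between chronicles and coherence forbids mixing covering paths of distinct lists, so that part goes through). The gap is in your second step. The design $\de{G}=\view{\view{\widetilde{\de{p}}}}$ built from a single covering path $\de{p}$ of $\de{D}$ is a one-branch design, and for $n\geq 1$ it is \emph{not} an element of $(\Ln)^{\perp}$, let alone of $|(\Ln)^{\perp}|$: if $\de{D}'\in\Ln$ represents a list differing from that of $\de{D}$ in some coordinate $i$, the interaction of $\de{G}$ with $\de{D}'$ follows $\widetilde{\de{p}}$ until $\de{D}'$ plays $(+,\xi.\underline{i}.0.1.\overline{a_i'},\emptyset)$ where $\de{G}$ only offers the negative action dual to $(+,\xi.\underline{i}.0.1.\overline{a_i},\{0\})$ (or vice versa), and normalization fails. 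Since $\de{E}\in(\Ln)^{\perp\perp}$ only guarantees orthogonality to members of $(\Ln)^{\perp}$, exhibiting a design \emph{outside} $(\Ln)^{\perp}$ that is not orthogonal to $\de{E}$ yields no contradiction.

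The repair is already available in Lemma \ref{visLnperp}: instead of $\widetilde{\de{p}}$ alone, take the design $\de{G}_{\beta}=\view{\view{P_{\beta}}}$ aggregating the duals of the covering paths of \emph{all} elements of $\Ln$ that follow one fixed order $\beta$. These duals are pairwise coherent by Lemma \ref{cohpath}, so $\de{G}_{\beta}$ is a design, it belongs to $|(\Ln)^{\perp}|$, it contains $\widetilde{\de{p}}$ as a branch, and since $\de{E}\subseteq\de{D}$ the interaction with $\de{E}$ is forced onto that same branch and fails at the first action of $\de{D}$ missing from $\de{E}$; this gives the contradiction you want. Alternatively you can avoid counter-designs altogether, as the paper does: the set of paths of $\de{D}$ is a maximal, finite-stable and saturated clique whose dual sits in $V_{(\Ln)^{\perp}}$ by Lemma \ref{visLnperp}, so Proposition \ref{compinc} directly places $\de{D}$ in $|(\Ln)^{\perp\perp}|$, and Lemma \ref{matset} then lifts the inclusion to $(\Ln)^{\da}$.
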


\begin{proof}

\begin{varitemize}

\item The fact that $\Ln$ is $\da$-free follows from its definition.
\item We start proving $  (\Ln)^{\da}$ $ \subseteq |   (\Ln)^{\perp\perp} | $.
\\Let $\de{D} \in \Ln$, then from Lemma \ref{vispathLn} all the paths of $\de{D}$ are visitable. Let $\widetilde{C}$ be the set of paths of $\de{D}$, then $\widetilde{C}$ is a maximal clique of visitable paths. Thus $C \subseteq V_{(\Ln)^{\perp}}$ by means of Lemma \ref{visLnperp}. $C$ is finite stable because it contains a finite number of paths, so each sequence of paths is finite. It is saturated because in $\Ln$ there does not exist two negative actions with the same address, then if $\de{q}$ is a prefix of an element of $C$ such that $\de{q} \kappa^{+} \in V_{(\Ln)^{\perp}}$ ($\kappa^+ \neq \da$) then $\de{q} \kappa^{+}$ is a prefix of an element of $C$ (there is only one possible choice for $\kappa^{+}$). Therefore from Proposition \ref{compinc} $\view{ \view{ \widetilde{C} } }=\de{D} \in |  (\Ln)^{\perp\perp} |$.
\\Thus $\Ln \subseteq |(\Ln)^{\perp\perp}|$.
\\Therefore from Lemma \ref{matset}  follows $  (\Ln)^{\da}$ $ \subseteq |   (\Ln)^{\perp\perp} | $.

\item  Now we prove  $|   (\Ln)^{\perp\perp} | \subseteq   (\Ln)^{\da}$.
\\ If $\de{D} \in |  (\Ln)^{\perp\perp}|$, there exists $C \subset V_{\Ln}^{\perp}$ such that $C$ is finite stable and saturated and $\widetilde{C}$ is a maximal clique of $\widetilde{ V_{(\Ln)^{\perp}} }$ and $\view{ \view{ \widetilde{C} } } =\de{D}$ (Proposition \ref{compinc}). We want to show that $\de{D} \in   (\Ln)^{\da}$. From Lemma \ref{visLnperp}, $\widetilde{  V_{(\Ln)^{\perp} }   }  = V_{ \Ln}$. $\widetilde{C}$ is a maximal clique of $\widetilde{ V_{(\Ln)^{\perp}} }$, then it is a maximal clique of $V_{ \Ln}$. Moreover all paths of $ \Ln$ are visitable (Lemma \ref{vispathLn}). Which paths of $\Ln$ can form a maximal clique? $\widetilde{C}$ cannot contain two distinct paths which respectively cover (all the actions of) two distinct elements of $\Ln$, indeed from Lemma \ref{cohpath} these paths are not coherent between them. Then the elements of $\widetilde{C}$ are all the chronicles of an element of $\Ln$ and their $\da-$shortenings. This means that there exists $\de{E} \in (\Ln)^{\da}$ such that the elements of $\widetilde{C}$ are all the paths of $\de{E}$. Then $\de{D}=\view{ \view{ \widetilde{C} } }  =\de{E} \in (\Ln)^{\da}$. 
\\Therefore $ |   (\Ln)^{\perp\perp} |  \subseteq  (\Ln)^{\da} $. 

\end{varitemize}

\end{proof}

So $\Ln$ represents the canonical terms of type $List_n $(lists of natural numbers of length $n$). This result can be extended to the set $List$ that represents all lists of natural numbers, i.e., $List$ is principal.

\subsection{The type $\to$. }\label{flash}
Given two types $A$ and $B$, $A \to B$ is the set of functions 
from $A$ to $B$. How can we interpret it in Ludics?
Given two behaviours $\textbf{A}$ and $\textbf{B}$, Girard defines \cite{LocSol} the sequent of behaviours $\textbf{A} \vdash \textbf{B}$ as $\{ \de{D} \, |\, \forall \de{A} \in \textbf{A}$ $\llbracket \de{D}, \de{A} \rrbracket \in \textbf{B}\}$, i.e., the designs s.t. their interaction with any element of $\textbf{A}$ gives an element of $\textbf{B}$. It corresponds exactly to the set of functions from $\textbf{A}$ to $\textbf{B}$. We extend this notion also to principal sets of designs. If the behaviours $\textbf{A},\textbf{B}$ are respectively generated by the principal sets $\mathbb{A}^{\alpha},\mathbb{B}^{\beta}$ then we define the set of designs of base $\alpha \vdash \beta$ 
\begin{center}
$  \A^{\alpha} \R \B^{\beta} :=\,\{  \de{D} \text{ minimal w.r.t. inclusion }  |\, \forall \de{A} \in \A^{\alpha}$ $\llbracket \de{D}, \de{A} \rrbracket \in \B^{\beta} \}$.\\ 
\end{center}
In the following we shorten $\A^{\alpha} \R \B^{\beta}$ with $\fr$.

\begin{prop}\label{prefreccia} 
If $\A,\B$ are two principal sets, then $\A\R \B$ is principal. 
\end{prop}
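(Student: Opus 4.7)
The plan is to verify the two conditions of Definition~\ref{princ}: that $\A\R\B$ is $\da$-free, and that $|(\A\R\B)^{\perp\perp}|=(\A\R\B)^{\da}$.

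First I would handle $\da$-freeness. Suppose for contradiction that some $\de{D}\in\A\R\B$ contains a maximal chronicle of the form $w\kappa^{-}\da$. Coherence forbids two chronicles of a design from first differing on positive actions, so $w\kappa^{-}$ has no other extension in $\de{D}$ and pruning yields a valid, strictly smaller design $\de{D}':=\de{D}\setminus\{w\kappa^{-},w\kappa^{-}\da\}$. For any $\de{A}\in\A$, the normalisation $\llbracket\de{D},\de{A}\rrbracket$ cannot visit $w\kappa^{-}\da$: otherwise it would terminate with $\{\da\}$, producing an output carrying a daimon, which by $\da$-freeness of $\B$ could not lie in $\B$, contradicting $\de{D}\in\A\R\B$. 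Hence $\llbracket\de{D}',\de{A}\rrbracket=\llbracket\de{D},\de{A}\rrbracket\in\B$ for every $\de{A}\in\A$, contradicting the minimality built into the definition of $\A\R\B$.

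Next, for the inclusion $(\A\R\B)^{\da}\subseteq|(\A\R\B)^{\perp\perp}|$, by Lemma~\ref{matset} it suffices to prove $\A\R\B\subseteq|(\A\R\B)^{\perp\perp}|$. Fix $\de{D}\in\A\R\B$ and assume $\de{E}\subsetneq\de{D}$ belongs to $(\A\R\B)^{\perp\perp}$. For each chronicle $\de{c}$ of $\de{D}$ not in $\de{E}$, I would use principality of $\A$ and $\B$ to exhibit a counter-net $\de{F}\in(\A\R\B)^{\perp}$ whose interaction with $\de{D}$ visits $\de{c}$: such an $\de{F}$ can be built by composing a suitable $\de{A}\in\A$ with a design in $\B^{\perp}$ detecting the corresponding output at $\beta$. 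Then $\de{D}\perp\de{F}$ while $\de{E}\not\perp\de{F}$ (the missing chronicle breaks normalisation), contradicting $\de{E}\in(\A\R\B)^{\perp\perp}$. Existence of such counter-nets for every chronicle of $\de{D}$ is exactly the separation content supplied by the assumption that $\A$ and $\B$ are principal.

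The main obstacle is the reverse inclusion $|(\A\R\B)^{\perp\perp}|\subseteq(\A\R\B)^{\da}$. Let $\de{D}$ be material in the biorthogonal; for each $\de{A}\in\A$ the normalisation $\llbracket\de{D},\de{A}\rrbracket$ lies in $\textbf{B}$ and, by materiality of $\de{D}$ together with principality of $\B$, sits in $\B^{\da}$. The plan is to assemble a $\da$-free lift $\de{D}^{+}$ of $\de{D}$ with $\de{D}\in\{\de{D}^{+}\}^{\da}$ and $\de{D}^{+}\in\A\R\B$: at each terminal $\da$ of $\de{D}$ one grafts the $\da$-free continuation dictated by the interactions with $\A$. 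The delicate point is checking that these local continuations glue coherently across different choices of $\de{A}\in\A$ and that the resulting $\de{D}^{+}$ is minimal; I would attack this through the path-level characterisation of incarnation in Proposition~\ref{compinc}, describing the visitable paths of $(\A\R\B)^{\perp\perp}$ as compositions of visitable paths of $\A$ and $\B$ across the cut address~$\alpha$, and then reading off $\de{D}^{+}$ from those paths.
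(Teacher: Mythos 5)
Your $\da$-freeness argument is essentially the paper's (minimality of $\de{D}\in\fr$ forces every action, in particular any daimon, to be visited by some normalisation $\llbracket\de{D},\de{A}\rrbracket$, whose result then cannot lie in the $\da$-free set $\B$), so that part stands. The first inclusion $\fr\subseteq|(\fr)^{\perp\perp}|$, however, already contains an unjustified step: you posit, for each chronicle $\de{c}$ of $\de{D}$ absent from $\de{E}$, a counter-net in $(\fr)^{\perp}$ whose interaction with $\de{D}$ visits $\de{c}$, and call this ``exactly the separation content'' of principality. It is not: principality of $\A$ and $\B$ gives materiality of $\llbracket\de{D},\de{A}\rrbracket$ in $\B^{\perp\perp}$, but an action of $\de{D}$ visited during the \emph{open} normalisation $\llbracket\de{D},\de{A}\rrbracket$ need not be visited during the \emph{closed} normalisation $\llbracket\de{D},\de{A},\de{G}\rrbracket$ for any particular $\de{G}\in\B^{\perp}$, so the counter-nets you need are not handed to you. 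The paper sidesteps this entirely: it shows $\{\{\de{F},\de{G}\}\mid\de{F}\in\A,\ \de{G}\in\B^{\perp}\}\subseteq(\fr)^{\perp}$ via associativity, deduces $\llbracket\de{E},\de{A}\rrbracket\in\B^{\perp\perp}$ for the hypothetical $\de{E}\subsetneq\de{D}$, and then uses monotonicity of normalisation together with materiality of $\llbracket\de{D},\de{A}\rrbracket$ to conclude $\llbracket\de{E},\de{A}\rrbracket=\llbracket\de{D},\de{A}\rrbracket\in\B$, contradicting minimality of $\de{D}$ --- no counter-net ever has to be exhibited.

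The decisive gap is in the reverse inclusion $|(\fr)^{\perp\perp}|\subseteq(\fr)^{\da}$, which is the heart of the proposition and which you explicitly leave as a plan: the construction of a $\da$-free lift $\de{D}^{+}$ by grafting continuations at each terminal daimon, the coherence of these graftings across all $\de{A}\in\A$, and the minimality of the result are precisely the ``delicate points'' you name without resolving, and Proposition~\ref{compinc} is invoked only as a hoped-for tool. The paper's proof takes a different and more economical route that never builds such a lift: for $\de{E}$ material in $(\fr)^{\perp\perp}$ it shows, by two materiality-based contradictions, first that $\llbracket\de{E},\de{A}\rrbracket\in\B^{\da}$ for every $\de{A}\in\A$ (otherwise one extracts $\de{F}\subsetneq\de{E}$ with $\de{F}\in\A\R\B^{\da}\subseteq(\fr)^{\perp\perp}$, contradicting materiality of $\de{E}$), and second that $\de{E}$ is \emph{minimal} with this property; hence $\de{E}\in\A\R\B^{\da}$, which is identified with $(\fr)^{\da}$ because $\A$ and $\B$ are $\da$-free. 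In short: your outline is a research plan for the hard direction rather than a proof of it, and the one concrete mechanism you do propose (chronicle-separating counter-nets) is asserted rather than established.
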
 
\begin{proof}
We prove by contradiction that $\fr$ is $\da$-free.
\\ Let $\de{D} \in \A\R \B$ s.t. $\de{D}$ is not $\da$-free. Being $\de{D}$ minimal s.t. $ \forall \de{A} \in \A$, $\llbracket \de{D}, \de{A} \rrbracket \in \B$, all the actions of $\de{D}$ are visited during normalizations $\llbracket \de{D}, \de{A} \rrbracket $. $\de{D}$ contains a daimon, thus there exists $\de{A} \in \A$ s.t. $\llbracket \de{D}, \de{A} \rrbracket = \{ \da$$\}$. But $\llbracket \de{D}, \de{A} \rrbracket \in \B$ and $\B$ is principal, hence it is $\da$-free (contradiction). Thus $\fr$ is $\da$-free. 
\\ Now we prove that $|(\A\R \B)^{\perp\perp} | = (\A\R \B)^{\da}$.
\begin{varitemize}

\item We first prove by contradiction that $\fr \subseteq  |(\fr)^{\perp\perp}|$. Let $\de{D} \in \fr$ and $\de{D} \notin  |(\fr)^{\perp\perp}|$, that is $\exists \de{E} \subsetneq \de{D}$ s.t. $\de{E} \in (\fr)^{\perp\perp}$. 

\begin{varitemize}

\item  Let us prove that $\{ \{ \de{F}, \de{G}\} | \de{F} \in \A, \de{G} \in \B^{\perp}\} \subseteq (\A\R \B)^{\perp}$. If $\de{F} \in \A$, 
\\$\de{G} \in \B^{\perp}$, $\de{D} \in \A\R \B$, by definition of $ \A\R \B$, we have $\llbracket \de{D}, \de{F}\rrbracket \in \B$. Then $ \llbracket \llbracket \de{D}, \de{F}\rrbracket, \de{G} \rrbracket=\{ \da$$ \}$, moreover\footnotemark\footnotetext{See Associativity Theorem \cite{LocSol}.} $\llbracket \de{D}, \de{F} , \de{G}\rrbracket= \llbracket \llbracket \de{D}, \de{F}\rrbracket, \de{G} \rrbracket$, then \\$ \llbracket \de{D}, \de{F} , \de{G}\rrbracket= \{ \da $$ \}$. Thus $\{ \de{F}, \de{G} \}  \in (\A\R \B)^{\perp}$.

\item Then $(\A\R \B)^{\perp\perp} \subseteq \{ \{ \de{F}, \de{G}\} | \de{F} \in \A, \de{G} \in \B^{\perp}\}^{\perp}$.

\item $\llbracket \de{D}, \de{A} \rrbracket \in \B$, $\B$ is principal, then $\llbracket \de{D}, \de{A} \rrbracket \in|  \B^{\perp\perp}| $

\item Since $\de{E} \in (\A \R \B)^{\perp\perp}$,  then for all $\de{A} \in \A$, $\de{B} \in \B^{\perp}$ $\llbracket \de{E}, \de{A}, \de{B}\rrbracket = \{ \da $$\}$. Moreover $\llbracket \de{E}, \de{A}, \de{B}\rrbracket =  \llbracket  \llbracket  \de{E}, \de{A} \rrbracket , \de{B}\rrbracket $, then $\llbracket  \llbracket  \de{E}, \de{A} \rrbracket ,\de{B}\rrbracket = \{ \da$$ \}$, i.e., $ \llbracket  \de{E}, \de{A} \rrbracket \in \B^{\perp\perp}$. $\de{E} \subsetneq \de{D}$, then $\llbracket \de{E} , \de{A} \rrbracket \subsetneq \llbracket \de{D}, \de{A}\rrbracket \in \B$. Moreover $\llbracket \de{D}, \de{A} \rrbracket \in |  \B^{\perp\perp} | $, then $\llbracket \de{E}, \de{A} \rrbracket = \llbracket \de{D}, \de{A} \rrbracket $, i.e., $\llbracket \de{E} , \de{A} \rrbracket \in \B$. But $\de{E} \subsetneq \de{D}$ and $\de{D}$ is minimal s.t. for all $\de{A} \in \A$ its interaction with $\de{A}$ belongs to $\B$ (contradiction).
\end{varitemize}
Therefore $\fr \subseteq  |(\fr)^{\perp\perp}|$.

\item Then from Lemma \ref{matset}  $(\fr)^{\da} \subseteq |(\fr)^{\perp\perp}|$.

\item Let us prove that $|(\A\R \B)^{\perp\perp}| \subseteq (\A\R \B)^{\da}$. Let $\de{E} \in |(\A  \R  \B)^{\perp\perp}|$.
\begin{varitemize}
\item Since $\de{E} \in (\A  \R  \B)^{\perp\perp}$, as above for all $\de{A} \in \A$ we have that $\llbracket \de{E}, \de{A} \rrbracket \in \B^{\perp\perp}$. 
\item We prove by contradiction that for all $\de{A} \in \A$, $\llbracket\de{E}, \de{A} \rrbracket \in \B^{\da}$. 
\\Let $\llbracket\de{E}, \de{A} \rrbracket \notin \B^{\da}$. Since $\llbracket\de{E}, \de{A} \rrbracket \in \B^{\perp\perp}$, $\B$ is principal and $|\B^{\perp\perp}| \subseteq \B^{\perp\perp}$, $\llbracket\de{E}, \de{A} \rrbracket \notin \B^{\da}$ means that there exists a design $ \de{D}_{\de{A}} \subsetneq \llbracket\de{E}, \de{A}\rrbracket$ s.t. $\de{D}_{\de{A}} \in \B^{\da}$.
\\ We remark that there exists a design $\de{E}_{\de{A}}$ s.t. $ \de{E}_{\de{A}} \subsetneq \de{E}$ and $\llbracket \de{E}_{\de{A}}, \de{A} \rrbracket  = \de{D}_{\de{A}}$. Furthermore the set $\bigcup_{\de{A} \in \A }  \de{E}_{\de{A}}$ is a design.
\\$ \bigcup_{\de{A} \in \A}   \de{E}_{\de{A}} \subsetneq \de{E}$, and for all $\de{A} \in \A$ $\llbracket   \bigcup_{\de{A} \in \A}  \de{E}_{\de{A}}, \de{A} \rrbracket= \llbracket  \de{E}_{\de{A}} , \de{A}\rrbracket  \in \B^{\da}$, then there exists $\de{F} \subseteq    \bigcup_{\de{A} \in \A}  \de{E}_{\de{A}}$ minimal s.t. $\forall \de{A} \in \A$ $\llbracket \de{F}, \de{A}  \rrbracket \in \B^{\da}$, i.e., $\de{F} \in \A\R \B^{\da}$. Then\footnotemark\footnotetext{$(\fr)^{\da}$$= \A\ \R \B^{\da}$, because $\A$ and $\B$ are principal sets, thus $\da$-free.} $\de{F} \in (\fr)^{\da}$, thus $ \de{F} \in (\A\R \B)^{\perp\perp}$. 
\\Since $\de{F} \subsetneq \de{E}$, $ \de{F} \in (\A\R \B)^{\perp\perp}$ means that $\de{E} \notin |(\A\R \B)^{\perp\perp}|$, but by hypothesis \\$\de{E} \in |(\fr)^{\perp\perp}|$ (contradiction).\\
Thus for all $\de{A} \in \A$, $\llbracket  \de{E}, \de{A} \rrbracket \in \B^{\da}$. 
\item Let us prove by contradiction that $\de{E}$ is minimal s.t. $\forall \de{A} \in \A$, $\llbracket \de{E}, \de{A} \rrbracket \in \B^{\da}$. Let $\de{E}$ not minimal, i.e., there exists $\de{E}' \subsetneq \de{E}$ minimal s.t. $\forall \de{A} \in \A$, $\llbracket \de{E}', \de{A} \rrbracket \in \B^{\da}$. This means that $\de{E}' \in \A \R \B^{\da}$. Then $\de{E}' \in (\fr)^{\perp\perp}$. Since $\de{E}' \subsetneq \de{E}$, this means that $\de{E} \notin |(\fr)^{\perp\perp}|$ (contradiction). 
\end{varitemize}
Then $\de{E} \in \A \R \B^{\da}$, i.e., $\de{E} \in (\fr)^{\da}$.\\
Thus $|(\A\R \B)^{\perp\perp}| \subseteq (\A\R \B)^{\da}$.  
\end{varitemize}
\end{proof}
We want to treat uniformly all behaviours representing a type. We decide then to consider principal sets and behaviours with a positive atomic base, indeed this feature simplifies the representation of higher order types as for instance $(A \to B ) \to C$. The set $\fr$ is principal, but its elements have a negative base. We define then an  encoding which transforms a design on a negative base $\alpha \vdash \beta$ in a design on a positive atomic base. Using this encoding we define a set of designs on a positive, atomic base, that is principal and represents the canonical terms of type $\textbf{A} \to \textbf{B}$. 

\begin{defi}
Given a design $\de{D}$ with base $\alpha \vdash \beta$, where $\alpha = \gamma.0.0.0$ and $\beta= \gamma.0.1$, we define the design 
\\ \scalebox{.9}{
$   \de{D}_{cod} = \,   \shortstack{ $\de{D}$\\ $\hrulefill_{cod}$\\ $\vdash \gamma \phantom{ciao}$ }$ where $\shortstack{ $\hrulefill_{cod}$\\$\vdash \gamma \phantom{ciao}$} = \, \shortstack{  $\gamma.0.0.0 \vdash \gamma.0.1$ \\\hrulefill\\ $\vdash \gamma.0.0, \gamma.0.1$ \\\hrulefill\\ $ \gamma.0 \vdash$ \\\hrulefill\\ $\vdash \gamma$ }  $
}. Since we choose arbitrarily the base of designs we consider negative bases $\alpha \vdash \beta$ where $\alpha$ and $\beta$ have a common prefix. 
\end{defi}
In the following proposition we prove that the set obtained encoding all the designs of a principal set is still principal.

\begin{prop}\label{principalcod}
Given a principal set $\A$, the set $\A_{cod}$ of the encoded elements of $\A$ is principal, i.e., it is $\da$-free and $| (\A_{cod})^{\perp\perp} |= (\A_{cod})^{\da}$. 

\end{prop}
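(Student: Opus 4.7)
The plan is to show that the cod encoding preserves $\da$-freeness and transfers principality from $\A$ to $\A_{cod}$. The key observation is that the encoding prepends the rigid positive--negative--positive sequence $(+, \gamma, \{0\})(-, \gamma.0, \{0,1\})(+, \gamma.0.0, \{0\})$ to every $\de{D} \in \A$, and this prefix is forced on both sides by the shape of the bases, inducing a correspondence between $\A$ and $\A_{cod}$ and a dual correspondence between $\A^{\perp}$ and $(\A_{cod})^{\perp}$ that preserves orthogonality.

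First, $\A_{cod}$ is $\da$-free: the three prepended actions are all proper, and by hypothesis $\A$ is $\da$-free, so no daimon appears in any $\de{D}_{cod}$.

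For the inclusion $\A_{cod} \subseteq |(\A_{cod})^{\perp\perp}|$, I would argue by contradiction. If $\de{E} \subsetneq \de{D}_{cod}$ with $\de{E} \in (\A_{cod})^{\perp\perp}$, then the three actions of the cod prefix must belong to $\de{E}$ (otherwise $\de{E}$ would fail to interact with the forced dual wrapper), so the strict inclusion is realised in the subdesign above $\gamma.0.0.0 \vdash \gamma.0.1$, yielding some $\de{D}' \subsetneq \de{D}$. Using the dual wrapper correspondence, every $\de{G} \in \A^{\perp}$ lifts to an element of $(\A_{cod})^{\perp}$ whose interaction with $\de{D}_{cod}$ reduces, after three commutative steps, to the interaction of $\de{G}$ with $\de{D}$. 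It follows that $\de{D}' \in \A^{\perp\perp}$, contradicting $\de{D} \in |\A^{\perp\perp}| = \A^{\da}$ (principality of $\A$). Lemma \ref{matset} then upgrades this to $(\A_{cod})^{\da} \subseteq |(\A_{cod})^{\perp\perp}|$.

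For the reverse inclusion $|(\A_{cod})^{\perp\perp}| \subseteq (\A_{cod})^{\da}$, take a material $\de{E} \in |(\A_{cod})^{\perp\perp}|$ on the positive atomic base $\vdash \gamma$. Testing $\de{E}$ against dually wrapped elements of $|(\A_{cod})^{\perp}|$ forces its first action to be either $\da$ or $(+, \gamma, \{0\})$, and then to continue with exactly the cod prefix, so that the subdesign above $\gamma.0.0.0 \vdash \gamma.0.1$ lies in $\A^{\perp\perp} = \A^{\da}$. Materiality and minimality then identify $\de{E}$ with the cod-encoding of a $\da$-shortening of some element of $\A$, hence $\de{E} \in (\A_{cod})^{\da}$. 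The main obstacle will be to make the wrapper correspondence precise: concretely, one must verify that the three commutation steps induced by the cod prefix commute uniformly with the remainder of every normalization, so that the bijection between $\A^{\perp}$ and $(\A_{cod})^{\perp}$ genuinely preserves orthogonality in both directions.
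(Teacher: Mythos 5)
Your argument is correct and rests on the same idea as the paper's: the cod prefix is a rigid chronicle (each action justified by the immediately preceding one), so it is transparent to orthogonality and induces a correspondence between the behaviours generated by $\A$ and by $\A_{cod}$. The paper packages this once and for all as the identity $|(\A_{cod})^{\perp\perp}| = |\A^{\perp\perp}|_{cod}$, asserted in a footnote, after which both inclusions are one-liners from principality of $\A$ together with $(\A^{\da})_{cod} \subseteq (\A_{cod})^{\da}$; you instead unfold the same content into explicit interaction arguments for each inclusion, and you correctly isolate the remaining kernel (that the three commutation steps of the wrapper commute with every normalization, so the lift of $\A^{\perp}$ into $(\A_{cod})^{\perp}$ preserves orthogonality in both directions) --- which is precisely what the paper's footnote claims without proof, so the two arguments carry the same burden. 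One small slip to fix: in the reverse inclusion you write that the subdesign above $\gamma.0.0.0 \vdash \gamma.0.1$ lies in $\A^{\perp\perp} = \A^{\da}$, but principality gives $|\A^{\perp\perp}| = \A^{\da}$, not $\A^{\perp\perp} = \A^{\da}$; you therefore need (as you do implicitly in the next sentence) to transfer the materiality of $\de{E}$ in $(\A_{cod})^{\perp\perp}$ to materiality of that subdesign in $\A^{\perp\perp}$ before you may conclude that it is a $\da$-shortening of an element of $\A$.
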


\begin{proof}
\begin{varitemize}

\item $\A_{cod}$ is $\da$-free indeed $\A$ is $\da$-free and the encoding is $\da$-free. 
\\Now we prove $| (\A_{cod})^{\perp\perp} |= (\A_{cod})^{\da}$. 

\item To prove  $ (\A_{cod})^{\da} \subseteq  | (\A_{cod})^{\perp\perp}   | $, we first prove that  $ \A_{cod} \subseteq  | (\A_{cod})^{\perp\perp}   | $.
\\ Let $\de{A}_{cod} \in \A_{cod}$, then $\de{A} \in \A$. By definition of $\da$-shortening $\A \subseteq \A^{\da}$, and $\A$ is principal, i.e., $\A^{\da} = | \A^{\perp\perp} |$, so $\de{A} \in | \A^{\perp\perp} |$ and $\de{A}_{cod} \in  | \A^{\perp\perp} |_{cod}$. Since\footnotemark\footnotetext{ $ |  (\A_{cod} )^{\perp\perp}  | =  | \A^{\perp\perp} |_{cod}$, indeed for all $\de{A} \in \A$, $\de{A}_{cod} =\de{c} \de{D}$, where all the actions of the chronicle $\de{c}$ are justified by the immediately precedent action.}  $ |  (\A_{cod} )^{\perp\perp}  | =  | \A^{\perp\perp} |_{cod}$, we have $\de{A}_{cod} \in  |  (\A_{cod} )^{\perp\perp}  |$ and then $\A_{cod} \subseteq |  (\A_{cod} )^{\perp\perp}  |$.\\
 Therefore from Lemma \ref{matset} $ (\A_{cod})^{\da} \subseteq  | (\A_{cod})^{\perp\perp}   | $.
\item We just need to prove $| (\A_{cod})^{\perp\perp} |\subseteq (\A_{cod})^{\da}$. 
\\ Let $\de{E} \in | (\A_{cod})^{\perp\perp} | $. Since $ |  (\A_{cod} )^{\perp\perp}  | =  | \A^{\perp\perp} |_{cod}$,  $\de{E} \in  | \A^{\perp\perp} |_{cod}$. This means that there exists a design $\de{D} \in  | \A^{\perp\perp} |$ s.t. $\de{E}= \de{D}_{cod}$. $\A$ is principal, so $\de{D} \in (\A)^{\da}$ and then $\de{D}_{cod} \in (\A^{\da})_{cod}$. Moreover $(\A^{\da})_{cod} \subseteq (\A_{cod})^{\da}$, then $\de{D}_{cod} \in (\A_{cod})^{\da}$, i.e., $| (\A_{cod})^{\perp\perp} |\subseteq (\A_{cod})^{\da}$. 

\end{varitemize}
\end{proof}
From now we always suppose that negative designs can be encoded to obtain designs on an atomic positive base.  
\\We can sum up our proposition on the arrow type in the following tabular. 
\begin{center}
\begin{tabular}{| l | l | } \hline
{\textbf{Type Theory}} &  {\textbf{Ludics}} \\ \hline 
\hspace{1em}  $t : A \to B $ & \hspace{1em}$t^{\star}= \de{D} \in \fr$\\
\hspace{1em} $r$ non canonical term of type $A \to B$ & \hspace{1em} $\de{R}$ cut-net s.t. $\llbracket \de{R} \rrbracket \in \fr $\\
\hspace{1em} $u : A$ & \hspace{1em} $u^{\star} = \de{U} $ s.t. $ \llbracket \de{U} \rrbracket \in \A$\\
\hspace{1em} $(t)u : B$ & \hspace{1em} $((t)u )^{\star}= \llbracket  \de{D}, \de{U} \rrbracket \in \B$\\ \hline
\end{tabular}
\end{center}
In \cite{LocSol} Girard introduces the design $\de{Fax}$ that represents the identity function.

\begin{example}
The design $\de{Fax}_{\sigma \vdash \sigma'}\footnotemark\footnotetext{We shorten it with $\de{Fax}$ in the following.}$ is based on $\sigma \vdash \sigma'$
\begin{center}
$\de{Fax}_{\sigma \vdash \sigma'}  = \quad \shortstack{    \shortstack{...\\\\\\...}    \hspace{1em}    \shortstack{  $\vdots$ \\  $\de{Fax}_{\sigma' \star i \vdash \sigma \star i}$  \\\hrulefill\\  $\sigma'  i \vdash \sigma i$ \\\hrulefill\\ $\vdash \sigma', \sigma\star I$}   \hspace{1em} \shortstack{...\\\\\\...}   \\$\hrulefill_{P_f (\mathbb{N})}$\\ $\sigma \vdash \sigma'$ }$
\end{center}
\begin{center}
and as a set of chronicles $\de{Fax}_{\sigma \vdash \sigma'} =\{  (-, \sigma, I) (+, \sigma', I ) \de{Fax}_{\sigma 'i \vdash \sigma i} \, | \, I \in P_f (\mathbb{N}) \}$ \vspace{2mm}
 where $\sigma \star I$ denotes $\sigma.1,..., \sigma.n$ if $I = \{ 1,...,n  \}$.
\end{center}
 It corresponds in term of game semantics to the copycat strategy, i.e., at each step we copy the last action of the opponent. 
 \\In the article, to represent the partial identity function we consider a subset of $\de{Fax}$ that only contains the ramifications necessary to interact with the designs of a given set and call this design $Id$. 
 
 
\end{example}

 \begin{example}
We define the function which adds $n \in \mathbb{N}$ as the following design\\
\scalebox{.8}{
$\de{Su}_n= \quad \shortstack{      \shortstack{$\hrulefill_{\emptyset}$\\ $\vdash \beta.\overline{n}$\\\hrulefill\\\hrulefill\\ $\vdash \beta$  }   \hspace{1em}        \shortstack{  Id \\  $\sigma.\overline{1}  \vdash \beta.\overline{n+1} $  \\\hrulefill\\ $ \vdash  \sigma.0, \beta.\overline{n+1}$  \\\hrulefill\\\hrulefill\\ $\vdash \sigma.0, \beta$ } \\\hrulefill\\  $\sigma \vdash \beta$ }$
}
For all $\textbf{m} \in \N$, the net  $ \{ \textbf{m}, \de{Su}_n \}$ is a non canonical term of $\N$, while its normal form $\llbracket \textbf{m}, \de{Su}_n  \rrbracket =\textbf{m\pmb{+}n}$ is canonical. $\de{Su}_n$ tests if $m=0$ (in this case the result is directly $\textbf{n}$), and if $n>0$ makes n steps (to say that the result is at least $n$) and copies the rest of actions of $\textbf{m}$ with $Id$ ($m-1$ steps) to finally have $m+n$.
\end{example}

\begin{example}
The predecessor function is represented by the following design

\begin{center}
$\de{P}= \quad \shortstack{  \shortstack{$\hrulefill_{\da}$\\$\vdash \alpha$} \hspace{1em} \shortstack{$\de{P}_{0}$} \\\hrulefill\\ $\sigma \vdash \alpha$  }\quad $
and $\de{P}_{\overline{i}}= \shortstack{    \shortstack{  $\hrulefill_{\emptyset}$\\ $\vdash \alpha.  \overline{i}$}     \hspace{1em}   \shortstack{   $\de{P}_{ \overline{i+1}}$    \\\hrulefill\\\hrulefill\\   $\vdash \sigma.\overline{i+1}0, \alpha. \overline{i}$   }  \\\hrulefill\\ $\sigma. \overline{i+1} \vdash \alpha.  \overline{i}$ \\\hrulefill\\ $\vdash \sigma.  \overline{i} 0, \alpha.  \overline{i}$  }$ for all $i \in \mathbb{N}$.

\end{center}

Remark that $\llbracket \de{P}, \textbf{0} \rrbracket = \{ \da$$\}$, then $\de{P}$ belongs to $(\N^{\sigma} \R \N^{\alpha})^{\perp\perp}$ but not to $\N^{\sigma} \R \N^{\alpha}$. We have as desired that $\llbracket \de{P}, \textbf{n} \rrbracket = \textbf{n} \pmb{-}\textbf{1}$ for all $\textbf{n} \geq \textbf{1}$.   

\end{example}

\begin{example}~\\
\begin{minipage}{.43\textwidth}
 Let $b \in \mathbb{N}$ represented by the design $\textbf{b}$ on the base $\vdash \sigma.0.0$, let $l \in \Ln$ be $\langle a_{1},...,a_{n}\rangle$. The function which adds $b$ in head position is represented by $\de{Cons}_b$. The net $\{l, \de{Cons}_{b}\}$ is a non canonical term of $(\mathbb{L}_{n+1})^{\perp\perp}$, while its normal form, denoted by $b.l$, is canonical.  
\end{minipage}
~
\begin{minipage}{.28\textwidth}
\scalebox{.8}
{
$\de{Cons}_{b}= \hspace{-2em}\shortstack{  \shortstack{    $\vdots$ \\ $\de{D}^{<b>}_{1,\sigma}$ \\ $\vdash \sigma$  } \hspace{1em}  \shortstack{    \shortstack{  $\vdots$ \\ $\textbf{b}$ \\ $\vdash \sigma.0.0$ \\\hrulefill\\ $\sigma.0 \vdash$   }    \hspace{1em}   \shortstack{    \shortstack{ $\vdots$ \\ Id \\ $ \sigma.1.0.0 \vdash \xi.0$  }  \hspace{1em}   \shortstack{ $\vdots$ \\ Id \\ $ \sigma.10..1 \vdash \xi.1$  }     \\\hrulefill\\ $\vdash \sigma.1.0, \xi.0, \xi.1$ \\\hrulefill\\ $\sigma.1 \vdash \xi.0, \xi.1$  }      \\\hrulefill\\ $\vdash \xi.0, \xi.1, \sigma$     }      \\\hrulefill\\ $\xi \vdash \sigma$   }$
}
\end{minipage}
\end{example}









\begin{example}
The function which eliminates in a list of integers its element in position 2 is represented by the following design 

\begin{center}
\scalebox{.9}{
$\de{El}= \quad \shortstack{    \shortstack{  $\hrulefill_{\emptyset}$\\ $\vdash \sigma$} \hspace{1em}     \shortstack{        \shortstack{ $\vdots$ \\ Id \\ $ \sigma.1 \vdash \xi.0$} \hspace{1em}    \shortstack{ $\vdots$ \\ Id \\ $  \sigma.2.1.1 \vdash \xi.\underline{1}.1$ \\\hrulefill\\ $\vdash \xi.\underline{1}.0, \xi.\underline{1}.1,  \sigma.2.1$ \\\hrulefill\\ $\xi.\underline{1} \vdash \sigma.2.1$   \\\hrulefill\\ $\vdash \xi.1,  \sigma.2.1$ \\\hrulefill\\ $ \sigma.2 \vdash \xi.1$}  \\\hrulefill\\ $\vdash \xi.0, \xi.1,  \sigma$          } \\\hrulefill\\ $\xi \vdash \sigma$  }$
}
\end{center}
Given $\de{D} \in \Ln$, $\llbracket \de{D}, \de{El}\rrbracket$ gives as result the list represented by $\de{D}$, without the element in position 2 (if it exists).

\end{example}

\section{Focus on Dependent Types}\label{DepTyp}
\subsection{The type $(\Pi x \in A) B(x)$}
We generalize to type $(\Pi x \in A) B(x)$ the modelling given for the type $\to$. Let us give a type $A$ and a family of types $(B(x))_{x \in A}$, suppose that they are respectively represented by means of the principal set $\A^{\alpha}$ and the family of principal sets $(\B(x))^{\beta}_{ x \in \A^{\alpha}}$. We represent the canonical terms of type $(\Pi x \in A) B(x)$ by the designs (based on $\alpha \vdash \beta$) of the set $(\Pi x \in \A^{\alpha}) \B(x)^{\beta}$ defined below.
\bdm (\Pi x \in \A^{\alpha}) \B(x)^{\beta} : =\{   \de{D}  \text{ minimal w.r.t. inclusion } \, |\, \forall \de{A} \in \A^{\alpha}, \llbracket\de{A}, \de{D}\rrbracket \in \B(\de{A})^{\beta} \}
\edm
In the following we shorten $ (\Pi x \in \A^{\alpha}) \B(x)^{\beta}$ with $(\Pi x \in \A) \B(x)$. 

\begin{prop}\label{pinter}
Given a principal set $\A$ and a family of principal sets $(\B(x))_{x \in \A}$, 
\\$((\Pi x \in \A) \B(x))^{\perp\perp} = \bigcap_{ x \in \A} (x \R \B(x))^{\perp\perp}$.

\end{prop}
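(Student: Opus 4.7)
The plan is to prove equality by establishing both set inclusions, using the Associativity Theorem for normalization and the argument pattern of Proposition \ref{prefreccia}.

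For the inclusion $((\Pi x \in \A)\B(x))^{\perp\perp} \subseteq \bigcap_{x \in \A}(x \R \B(x))^{\perp\perp}$, it suffices to show, for each fixed $x_0 \in \A$, the dual inclusion $(x_0 \R \B(x_0))^{\perp} \subseteq ((\Pi x \in \A)\B(x))^{\perp}$, and then take biorthogonals. Given $\de{R} \in (x_0 \R \B(x_0))^{\perp}$ and $\de{D}' \in (\Pi x \in \A)\B(x)$, the defining condition of the dependent product gives $\llbracket \de{D}', x_0\rrbracket \in \B(x_0)$, so $\de{D}'$ satisfies the defining condition of $x_0 \R \B(x_0)$, though possibly not minimally. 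One extracts a minimal subdesign $\de{D}_0 \subseteq \de{D}'$ in $x_0 \R \B(x_0)$: orthogonality of $\de{R}$ then gives $\llbracket \de{D}_0, \de{R}\rrbracket = \{\da\}$, and since normalization is deterministic and the chronicles used in this successful interaction all belong to $\de{D}_0 \subseteq \de{D}'$, the same computation succeeds with $\de{D}'$, yielding $\llbracket \de{D}', \de{R}\rrbracket = \{\da\}$.

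For the reverse inclusion $\bigcap_{x \in \A}(x \R \B(x))^{\perp\perp} \subseteq ((\Pi x \in \A)\B(x))^{\perp\perp}$, I would take $\de{E}$ in the intersection and show $\de{E} \perp \de{R}$ for every $\de{R} \in ((\Pi x \in \A)\B(x))^{\perp}$. Decomposing $\de{R}$ along the base $\alpha \vdash \beta$ as a net $\{\de{U}, \de{V}\}$ with $\de{U}$ of base $\vdash \alpha$ and $\de{V}$ of base $\beta \vdash$, the Associativity Theorem rewrites the target as $\llbracket \llbracket \de{E}, \de{U}\rrbracket, \de{V}\rrbracket = \{\da\}$. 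When $\de{U} = \de{A} \in \A$ is canonical, orthogonality of $\{\de{A}, \de{V}\}$ to every $\de{D} \in (\Pi x \in \A)\B(x)$ entails that $\de{V}$ tests the images $\llbracket \de{D}, \de{A}\rrbracket \in \B(\de{A})$; together with principality of $\B(\de{A})$, this forces $\de{V} \in \B(\de{A})^{\perp}$. Then $\{\de{A}, \de{V}\} \in (\de{A} \R \B(\de{A}))^{\perp}$ by the argument of Proposition \ref{prefreccia}, and the hypothesis $\de{E} \in (\de{A} \R \B(\de{A}))^{\perp\perp}$ yields the desired orthogonality.

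The main obstacle is handling non-canonical $\de{U}$, i.e.\ $\de{U}$ in the behaviour $\A^{\perp\perp}$ but not in $\A$. I expect to exploit the principality of $\A$ (so $\A^{\da} = |\A^{\perp\perp}|$) together with Lemmas \ref{matdes}--\ref{matset}: the interaction $\llbracket \de{E}, \de{U}\rrbracket$ depends only on the chronicles of $\de{U}$ actually visited during normalization, and by principality those visited portions can be analyzed through daimon-shortenings of canonical elements of $\A$, reducing the general case to the canonical one above.
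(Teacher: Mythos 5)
Your first inclusion is sound and is essentially the paper's argument: you extract a minimal subdesign $\de{D}_0 \subseteq \de{D}'$ belonging to $x_0 \R \B(x_0)$ and then use the fact that a behaviour is closed under passing to superset designs (adding chronicles cannot break orthogonality). Whether one packages this as $(x_0\R\B(x_0))^{\perp}\subseteq((\Pi x\in\A)\B(x))^{\perp}$, as you do, or as $(\Pi x\in\A)\B(x)\subseteq\bigcap_{x}(x\R\B(x))^{\perp\perp}$ followed by a biorthogonal closure (the intersection of behaviours being a behaviour), as the paper does, is immaterial.

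The second inclusion is where your argument has a genuine gap. You propose to analyze the nets $\{\de{U},\de{V}\}$ in $((\Pi x\in\A)\B(x))^{\perp}$, but (a) even in the canonical case $\de{U}=\de{A}\in\A$, the step ``$\de{V}$ tests the images $\llbracket\de{D},\de{A}\rrbracket$, hence $\de{V}\in\B(\de{A})^{\perp}$'' requires that every element of $\B(\de{A})$ arise as $\llbracket\de{D},\de{A}\rrbracket$ for some $\de{D}$ in the product, which you do not establish; and (b) the non-canonical case, which you yourself flag as the main obstacle, cannot be reduced to the canonical one via principality of $\A$: a net $\{\de{U},\de{V}\}$ can be orthogonal to every element of the product without $\de{U}$ belonging to $\A^{\perp\perp}$ at all, so there is no a priori relation between the chronicles of $\de{U}$ visited during normalization and $\da$-shortenings of elements of $\A$. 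The paper avoids touching the orthogonal altogether: given $\de{D}$ in the intersection, for each $x$ it takes the incarnation $\de{D}_x\subseteq\de{D}$ in $(x\R\B(x))^{\perp\perp}$, invokes principality of $x\R\B(x)$ (Proposition \ref{prefreccia}) to get $\de{D}_x\in(x\R\B(x))^{\da}$, forms $\bigcup_{x}\de{D}_x\subseteq\de{D}$, and extracts from it a minimal subdesign $\de{F}\in((\Pi x\in\A)\B(x))^{\da}\subseteq((\Pi x\in\A)\B(x))^{\perp\perp}$; closure of behaviours under superset designs then gives $\de{D}\in((\Pi x\in\A)\B(x))^{\perp\perp}$. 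You should replace the second half of your argument by this subdesign-extraction route --- note that this is precisely where Proposition \ref{prefreccia} is needed, and your sketch never actually uses it.
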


\begin{proof}
\begin{varitemize}
\item We prove first that $(\Pi x \in \A) \B(x) \subseteq  \bigcap_{ x \in \A} (x \R \B(x))^{\perp\perp}$.
\\ If $\de{D} \in (\Pi x \in \A) \B(x)$, then it is minimal s.t. for all $x \in\A$ $\llbracket \de{D}, x\rrbracket \in \B(x)$. Let $x_0 \in \A$, by definition of $x_0 \R \B(x_0)$ there exists $\de{D}'_{x_0} \subseteq \de{D}$ s.t. $\de{D}'_{x_0} \in x_0 \R \B(x_0)$. So $\de{D}'_{x_0} \in (x_0 \R \B(x_0))^{\perp\perp}$, $\de{D}'_{x_0} \subseteq \de{D}$ and then for all $x \in \A$ $\de{D} \in  (x \R \B(x))^{\perp\perp}$. Thus $(\Pi x \in \A) \B(x) \subseteq  \bigcap_{ x \in \A} (x \R \B(x))^{\perp\perp}$.
\item We remark that given a family of sets $(E_x)_{x \in N}$, $(\bigcap_{x \in N} E_x^{\perp\perp})^{\perp\perp}= \bigcap_{x \in N} E^{\perp\perp}_x$. Moreover given two sets of designs $E$ and $F$, if $E \subseteq F$ then $E^{\perp\perp} \subseteq F^{\perp\perp}$. 
\\Then from the first item we have $((\Pi x \in \A) \B(x))^{\perp\perp} \subseteq \bigcap_{ x \in \A} (x \R \B(x))^{\perp\perp}$. 
\item Let us prove that $\bigcap_{ x \in \A} (x \R \B(x))^{\perp\perp} \subseteq ((\Pi x \in \A) \B(x))^{\perp\perp}$.
\\Let $\de{D} \in \bigcap_{ x \in \A} (x \R \B(x))^{\perp\perp}$, then  for all $x \in \A$ $\de{D} \in (x \R \B(x))^{\perp\perp}$. Given $x_0 \in \A$, there exists a design $\de{D}_{x_0} \subseteq \de{D}$ s.t. $\de{D}_{x_0} \in | (x_0 \R \B(x_0))^{\perp\perp} |$. From Proposition \ref{prefreccia}, $x_0 \R \B(x_0)$ is principal, then $\de{D}_{x_0} \in (x_0 \R \B(x_0))^{\da}$. $\bigcup_{x \in \A}  \de{D}_{x} \subseteq \de{D}$ and $\llbracket \bigcup_{x \in \A}  \de{D}_{x} , x \rrbracket \in \B(x)$, then there exists a design $\de{F} \subseteq \bigcup_{x \in \A}  \de{D}_{x}$ s.t. $\de{F} \in ((\Pi x \in \A) \B(x))^{\da}$. Then $\de{F} \in ((\Pi x \in \A) \B(x))^{\perp\perp}$. $\de{F} \subseteq \de{D}$, therefore $\de{D} \in ((\Pi x \in \A) \B(x))^{\perp\perp}$. 
\\Thus $\bigcap_{ x \in \A} (x \R \B(x))^{\perp\perp} \subseteq ((\Pi x \in \A) \B(x))^{\perp\perp} $. 
\end{varitemize}
\end{proof}

\begin{rem}
The biorthogonal closure is really crucial to prove the previous proposition. Indeed given a principal set $\A$ and a family of principal sets $(\B(x))_{x \in \A}$, $(\Pi x \in \A) \B(x)$ is not always equal to $\bigcap_{x \in \A}  x \R \B(x)$. For instance let 
\\$\A= \{  \de{A}_1, \de{A}_2\}$, $\B(\de{A}_1)= \{\de{B}_1  \}$, $\B(\de{A}_2)= \{ \de{B}_2  \}$. 
\\\scalebox{.8}
{
$ \de{A}_1 = \shortstack{ $\hrulefill_{\emptyset}$ \\ $\vdash \alpha.01$  \\\hrulefill\\  $\alpha.0 \vdash $ \\\hrulefill\\ $\vdash \alpha$}$, \hspace{1em} $ \de{A}_2 = \shortstack{ $\hrulefill_{\emptyset}$ \\ $\vdash \alpha.1.2$  \\\hrulefill\\  $\alpha.1 \vdash $ \\\hrulefill\\ $\vdash \alpha$}$, \hspace{1em} $\de{B}_1 = \shortstack{ $\hrulefill_{\emptyset}$ \\$\vdash \beta$}$, \hspace{1em} $\de{B}_2 = \shortstack{ $\hrulefill_{\emptyset}$ \\ $\vdash \beta.0.2$  \\\hrulefill\\ $\beta.0 \vdash$ \\\hrulefill\\ $\vdash \beta$  }$,
}
\scalebox{.8}
{
$\de{D} = \shortstack{          \shortstack{     \shortstack{$\vdots$ \\ $\de{B}_1$ \\ $\vdash \beta$ \\\hrulefill\\ $\alpha.0.1 \vdash \beta$ \\\hrulefill\\ $\vdash \alpha.0, \beta$  }       \hspace{1em}    \shortstack{$\vdots$ \\ $\de{B}_2$ \\ $\vdash \beta$ \\\hrulefill\\  $\alpha.1.2 \vdash \beta$  \\\hrulefill\\  $\vdash \alpha.1, \beta$  }  }    \\\hrulefill\\   $\alpha\vdash \beta$} \quad $
}
\\The elements of $\de{A}_1 \R \B(\de{A_1})$ have first action $(-, \alpha , \{ 0\})$ while the elements of $ \de{A}_2 \R \B( \de{A}_2)$ have $(-, \alpha , \{ 1\})$, then $ \bigcap_{\de{A} \in \A} (\de{A} \R \B( \de{A} ) )$ is empty. $\de{D}$ belongs to $(\Pi x \in \A) \B(x)$, so $(\Pi x \in \A) \B(x)\neq \emptyset$. Thus $(\Pi x \in \A) \B(x) \neq \bigcap_{x \in \A}  x \R \B(x)$. 
\end{rem}

\begin{prop}\label{preprod}
Let $\A$ be a principal set and $(\B(x))_{x \in \A}$ a family of principal sets. Then $(\Pi x \in \A) \B(x)$ is principal.
\end{prop}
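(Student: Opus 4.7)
The plan is to mirror, step by step, the proof of Proposition~\ref{prefreccia} for the simple arrow, replacing every appeal to ``$\A \R \B$'' by an appeal to $(\Pi x \in \A) \B(x)$, and using Proposition~\ref{pinter} as the bridge that lets us translate statements about the biorthogonal back into pointwise statements indexed by $\de{A} \in \A$. Concretely, I would prove the two halves of Definition~\ref{princ} separately.

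First I would dispatch $\da$-freeness. Suppose $\de{D} \in (\Pi x \in \A) \B(x)$ contains a daimon. By minimality with respect to inclusion every action of $\de{D}$ is visited during some normalization $\llbracket \de{A}, \de{D} \rrbracket$ with $\de{A} \in \A$, so for that particular $\de{A}$ the daimon is actually reached and $\llbracket \de{A}, \de{D} \rrbracket = \{\da\}$. But $\llbracket \de{A}, \de{D} \rrbracket$ must lie in $\B(\de{A})$, which is principal, hence $\da$-free --- a contradiction.

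Next, for the equality $|((\Pi x \in \A)\B(x))^{\perp\perp}| = ((\Pi x \in \A)\B(x))^{\da}$, the inclusion $\subseteq$ starts by showing $(\Pi x \in \A)\B(x) \subseteq |((\Pi x \in \A)\B(x))^{\perp\perp}|$ by contradiction and then applies Lemma~\ref{matset}. If $\de{E} \subsetneq \de{D}$ with $\de{E} \in ((\Pi x \in \A) \B(x))^{\perp\perp}$, then by Proposition~\ref{pinter} we have $\de{E} \in (x \R \B(x))^{\perp\perp}$ for every $x \in \A$; using associativity of normalization exactly as in Proposition~\ref{prefreccia}, $\llbracket \de{E}, \de{A} \rrbracket \in \B(\de{A})^{\perp\perp}$. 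Since $\B(\de{A})$ is principal and $\llbracket \de{D}, \de{A} \rrbracket \in |\B(\de{A})^{\perp\perp}|$, and $\llbracket \de{E}, \de{A}\rrbracket \subseteq \llbracket \de{D}, \de{A} \rrbracket$, we conclude $\llbracket \de{E}, \de{A}\rrbracket = \llbracket \de{D}, \de{A}\rrbracket \in \B(\de{A})$, contradicting the minimality of $\de{D}$.

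The reverse inclusion $\supseteq$ is the delicate half, and is where I expect most of the work. Let $\de{E} \in |((\Pi x \in \A) \B(x))^{\perp\perp}|$. By Proposition~\ref{pinter}, $\de{E} \in (x \R \B(x))^{\perp\perp}$ for each $x \in \A$, so using the associativity argument again, $\llbracket \de{E}, \de{A} \rrbracket \in \B(\de{A})^{\perp\perp}$ for every $\de{A} \in \A$. I claim that in fact $\llbracket \de{E}, \de{A}\rrbracket \in \B(\de{A})^{\da}$: otherwise, since $\B(\de{A})$ is principal, there is $\de{D}_{\de{A}} \subsetneq \llbracket\de{E}, \de{A}\rrbracket$ with $\de{D}_{\de{A}} \in \B(\de{A})^{\da}$, lifted to a subdesign $\de{E}_{\de{A}} \subsetneq \de{E}$, and assembling $\bigcup_{\de{A}} \de{E}_{\de{A}} \subsetneq \de{E}$ together with a minimal sub-design $\de{F}$ of it yields $\de{F} \in (\Pi x \in \A)\B(x)^{\da} \subseteq ((\Pi x \in \A)\B(x))^{\perp\perp}$, contradicting materiality of $\de{E}$. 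Then exactly the same minimality argument (using $\de{E}$ itself in place of $\bigcup \de{E}_{\de{A}}$) shows that $\de{E}$ is minimal w.r.t. inclusion such that $\llbracket \de{E}, \de{A}\rrbracket \in \B(\de{A})^{\da}$ for every $\de{A} \in \A$, i.e.\ $\de{E} \in (\Pi x \in \A)\B(x)^{\da}$. The main obstacle is the bookkeeping around the assembly $\bigcup_{\de{A}} \de{E}_{\de{A}}$ (checking it is still a design and that the minimal $\de{F}$ below it is $\da$-free once $\A$ and $\B$ are); this step reuses the auxiliary argument already developed in Proposition~\ref{prefreccia} verbatim.
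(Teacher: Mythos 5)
Your proof is correct and follows essentially the same route as the paper's: $\da$-freeness and the inclusion $((\Pi x\in\A)\B(x))^{\da}\subseteq|((\Pi x\in\A)\B(x))^{\perp\perp}|$ are handled exactly ``as in Proposition~\ref{prefreccia}'', and the reverse inclusion goes through Proposition~\ref{pinter}, a union of per-$\de{A}$ subdesigns of the material design, and a materiality contradiction. The only (harmless) divergence is that the paper shortcuts your hardest step by invoking the already-established principality of each $x\R\B(x)$ to obtain directly a subdesign $\de{D}_{x}\subseteq\de{D}$ with $\de{D}_{x}\in(x\R\B(x))^{\da}$, whereas you re-run the internal argument of Proposition~\ref{prefreccia} one level down, at the level of $\B(\de{A})^{\da}$.
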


\begin{proof}
 We prove that $(\Pi x \in \A) \B(x)$ is $\da$-free and $((\Pi x \in \A) \B(x))^{\da} \subseteq  | ( (\Pi x \in \A) \B(x))^{\perp\perp}  | $ as in Proposition \ref{prefreccia}.
\\We just need to prove that $| ( (\Pi x \in \A) \B(x))^{\perp\perp}  | \subseteq ((\Pi x \in \A) \B(x))^{\da}$.
\\ From Proposition \ref{pinter} $((\Pi x \in \A) \B(x))^{\perp\perp} = \bigcap_{ x \in \A} (x \R \B(x))^{\perp\perp}$, so we want to show $   |  \bigcap_{ x \in \A} (x \R \B(x))^{\perp\perp}  |  \subseteq  ((\Pi x \in \A) \B(x))^{\da} $.\\
 Let $\de{D} \in |  \bigcap_{ x \in \A} ( x \R \B(x))^{\perp\perp}|$. We want to prove that $\de{D} \in (( \Pi x \in \A )  \B(x) )^{\da}$, i.e., $\de{D}$ is minimal s.t. for all $x \in \A$, $\llbracket \de{D}, x \rrbracket \in (\B(x))^{\da}$. Given $x \in \A$, there exists $\de{D}_{x} \subseteq \de{D}$ s.t. $\de{D}_{x} \in   |   (x \R \B(x))^{\perp\perp} | = (x \R \B(x))^{\da}$.  Then for all $\de{A} \in \A$ $ \llbracket \bigcup_{x \in \A} \de{D}_{x} , \de{A} \rrbracket \in (\de{A} \R \B(\de{A}))^{\da}$. $\bigcup_{x \in \A} \de{D}_{x} \subseteq \de{D}$ and  there exists $\de{L} \subseteq \bigcup_{x \in \A} \de{D}_{x} $ s.t. $ \de{L} \in (( \Pi x \in \A )  \B(x) )^{\da}$.\\
 We prove by contradiction that $\de{D} \in  (( \Pi x \in \A )  \B(x) )^{\da}$.\\
 Let $\de{D} \notin (( \Pi x \in \A )  \B(x) )^{\da}$, then $\de{L} \subsetneq \de{D}$. Moreover $\de{L} \in  (( \Pi x \in \A )  \B(x) )^{\da}$ implies that $\de{L} \in ( (\Pi x \in \A) \B(x))^{\perp\perp}$. Then $\de{D} \notin |( (\Pi x \in \A) \B(x) )^{\perp\perp} |$ (contradiction).\\
So $\de{D} \in (( \Pi x \in \A )  \B(x) )^{\da}$.
\\Thus $ | ( (\Pi x \in \A) \B(x))^{\perp\perp}  | \subseteq ((\Pi x \in \A) \B(x))^{\da}$. 
\end{proof}
We sum up below the interpretation of the elimination and the equality rule for $(\Pi x \in A) B(x)$.
\begin{center}
\begin{tabular}{| l | l | } \hline
{\textbf{Martin-L\"of }$\Pi$-\textbf{rules}} &  {\textbf{Ludics}} \\ \hline 
\hspace{1em} $(\la x) b(x) \in (\Pi x \in A) B(x)$ & \hspace{1em}$ \de{D} \in ( \Pi x \in \A )  \B(x) $\\
\hspace{1em}$c \in (\Pi x \in A) B(x)$ & \hspace{1em} $\de{R}$ cut-net s.t. $\llbracket \de{R}\rrbracket \in (\Pi x \in \A) \B(x)$ \\
\hspace{1em}$Ap(c,a)$ where $a \in A$ & \hspace{1em} $\llbracket \de{R}, \de{A} \rrbracket $ where $\de{A} \in \A$\\
\hspace{1em}$Ap((\la x ) b(x), a)= b(a) \in B(a)$ & \hspace{1em} $\llbracket \de{D}, \de{A} \rrbracket = \llbracket \llbracket \de{R} \rrbracket , \de{A} \rrbracket= \llbracket \de{R}, \de{A} \rrbracket \in \B(\de{A}) $ \\ \hline
\end{tabular}
\end{center}

\begin{example}
We give two examples of elements of $(\Pi \textbf{n} \in \N) \Ln$.\\
As a first example, let $p = <p_{i}>_{i\geq 1}$ be an infinite sequence of integers represented by designs $(\mathfrak p_{i})_{i\geq 1}$ of $\N$ of base $\vdash \xi$, the design $\de{E}_{\sigma\vdash\xi}^{p}$ defined below belongs to $(\Pi \textbf{n} \in \N) \Ln$. This design builds for each $n \in \mathbb{N}$ the design representing the list $<p_{1}, \dots, p_n>$ of length $n$. 
\\ For all $i \geq 0$, \scalebox{.9}{ $\de{E}^{p}_{\sigma.\overline{i} \vdash \xi}     = \shortstack{   \shortstack{$\quad \quad \de{D}^{<p_{1},...,p_{i}>}_{\xi}$ \\ $\vdash \xi$} \hspace{1em}   \shortstack{$ \de{E}^{p}_{\sigma.\overline{i+1} \vdash \xi}$ \\\hrulefill\\ $\vdash \sigma.\overline{i}.0, \xi$} \\\hrulefill \\ $\sigma.\overline{i} \vdash \xi $     }$}, where $\de{D}^{<p_{1},...,p_{i}>}_{\xi}$ represents the list $<p_{1},...,p_{i}>$ on the base $\vdash \xi$. When $\de{E}_{\sigma\vdash\xi}^{p}$ interacts with $\textbf{n} \in \N$, it reads all the actions of $\textbf{n}$ and then gives the design which represents $<p_{1},...,p_{n}>$. We define another design, $\de{G}^{l}_{\sigma \vdash \xi}$, which gives lists that do not always have the same prefixes. Let $l=\{ l_{i} \, | \, i \in \mathbb{N}\}$, where $l_{i}$ is a list of length $i$. 
\\For all $i \geq 1$, \scalebox{.9}{$\de{G}^{l}_{\sigma.\overline{i} \vdash \xi} =  \shortstack{  \shortstack{$ \de{D}^{l_{i}}_{\xi} $ \\ $\vdash \xi$}  \hspace{1em}  \shortstack{  $\de{G}^{l}_{\sigma.\overline{i+1} \vdash \xi}$   \\\hrulefill\\ $\vdash \sigma.\overline{i}.0, \xi $ }      \\\hrulefill\\ $\sigma.\overline{i} \vdash \xi $ }$}, where $ \de{D}^{l_{i}}_{\xi} $ represents the list $l_{i}$. 
\end{example}

\subsection{The type $(\Sigma x \in A) B(x)$}  \label{sigma}
Given a type $A$ and a family of types $(B(x))_{x \in A}$ respectively represented by means of the principal set $\A$ based on $\vdash \sigma.1.1$ and the family of principal sets $(\B(x))_{x \in \A}$ based on $\vdash \sigma.2.2$, we represent the canonical terms  of type $(\Sigma x \in A) B(x)$ by
\\ $ (\Sigma x \in \A) \B(x) : = \{   \de{D}_{\de{A}, \de{B}}  \, | \, \de{A} \in \A, \de{B} \in \B(\de{A})       \}$ where $\de{D}_{\de{A}, \de{B}}= \scalebox{.7} {$\shortstack{         \shortstack{   \shortstack{     $\de{A}$ \\ $\vdash \sigma.1.1$  \\\hrulefill\\  $ \sigma.1 \vdash$}        \hspace{1em}    \shortstack{     $\de{B}$ \\ $\vdash \sigma.2.2$  \\\hrulefill\\  $\sigma.2 \vdash$}     }    \\\hrulefill\\ $\vdash \sigma$ }$}$.
 \\Remark that  $\de{D}_{\de{A}, \de{B}}$ may be seen as a tensor product $\odot$ defined in \cite{LocSol}. 
 \\The elements of $\Sig$ are $ (+, \sigma, \{1,2\}) (-, \sigma.1, \{1\}) \de{A} \cup (-, \sigma.2, \{2\}) \de{B}$, for all $\de{A} \in \A$ and $\de{B} \in \B$, that we denote as $w \de{A} \odot w' \de{B}$ where $ w = (+,\sigma, \{ 1  \}) (-, \sigma.1, \{ 1\})$ and $ w' =  (+,\sigma, \{ 2  \}) (-, \sigma.2, \{ 2\})$.

   \begin{prop}
   
   Given a principal set $\A$ and a family of principal sets $(\B(x))_{x \in \A}$,  $\Sig$ is principal.   
   \end{prop}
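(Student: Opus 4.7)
The proof will follow the same two-step template used in Propositions \ref{prefreccia} and \ref{preprod}. First, $\da$-freeness of $(\Sigma x\in\A)\B(x)$ is immediate: each $\de{D}_{\de{A},\de{B}}$ consists of the proper positive action $(+,\sigma,\{1,2\})$, the two proper negative actions $(-,\sigma.1,\{1\})$ and $(-,\sigma.2,\{2\})$, and the actions of $\de{A}$ and $\de{B}$, which are themselves $\da$-free because $\A$ and $\B(\de{A})$ are principal.

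For the inclusion $((\Sigma x\in\A)\B(x))^{\da}\subseteq |((\Sigma x\in\A)\B(x))^{\perp\perp}|$, by Lemma~\ref{matset} it suffices to show $(\Sigma x\in\A)\B(x)\subseteq |((\Sigma x\in\A)\B(x))^{\perp\perp}|$. I would argue by contradiction: if some $\de{D}_{\de{A},\de{B}}$ had a strict sub-design $\de{E}\subsetneq \de{D}_{\de{A},\de{B}}$ in $((\Sigma x\in\A)\B(x))^{\perp\perp}$, then $\de{E}$ must either (i) cut off actions belonging to the copy of $\de{A}$ above $\sigma.1$ or (ii) cut off actions belonging to the copy of $\de{B}$ above $\sigma.2$. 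In case (i), since $\de{A}\in|\A^{\perp\perp}|$ by principality of $\A$, there is a counter-design $\de{G}\in\A^{\perp}$ that is not orthogonal to the truncated $\de{A}$-part of $\de{E}$; I would then assemble a test $\de{T}_{\de{G}}$ on base $\sigma\vdash$ whose first rule has premise set $\{1,2\}$, which above $\sigma.1$ runs $\de{G}$ (after the canonical rule $(+,\sigma.1,\{1\})$) and above $\sigma.2$ places a $\da$ (after the rule $(+,\sigma.2,\{2\})$). This test lies in $((\Sigma x\in\A)\B(x))^{\perp}$ (since interaction with any $\de{D}_{\de{A}',\de{B}'}$ reduces to interaction of $\de{G}$ with $\de{A}'$), yet fails against $\de{E}$, yielding the contradiction. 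Case (ii) is symmetric, using instead a tester obtained from the principality of $\B(\de{A})$.

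For the reverse inclusion $|((\Sigma x\in\A)\B(x))^{\perp\perp}|\subseteq ((\Sigma x\in\A)\B(x))^{\da}$, I would take $\de{E}\in |((\Sigma x\in\A)\B(x))^{\perp\perp}|$. Using the family of $\da$-testers described above (with arbitrary $\de{G}\in\A^{\perp}$ or $\de{H}\in\B(\de{A})^{\perp}$), orthogonality forces $\de{E}$'s first action to be $(+,\sigma,\{1,2\})$ (or a $\da$, in which case we are done), its rules at $\sigma.1$ and $\sigma.2$ to have the unique ramifications $\{1\}$ and $\{2\}$, and the subdesigns $\de{E}_1$, $\de{E}_2$ sitting above $\sigma.1.1$ and $\sigma.2.2$ to satisfy $\de{E}_1\in \A^{\perp\perp}$ and $\de{E}_2\in \B(\de{E}_1)^{\perp\perp}$. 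Minimality of $\de{E}$ as an element of the incarnation, together with principality of $\A$ and of $\B(\de{E}_1)$, then yields $\de{E}_1\in \A^{\da}$ and $\de{E}_2\in \B(\de{E}_1)^{\da}$. Combining, $\de{E}$ is either a $\de{D}_{\de{A},\de{B}}$ or a $\da$-shortening thereof, so $\de{E}\in ((\Sigma x\in\A)\B(x))^{\da}$.

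The main obstacle I expect is the dependency of $\B$ on $x\in\A$: one must check that when extracting the $\B$-component of $\de{E}$, the corresponding $x\in\A$ is precisely the $\A$-component already extracted. This is handled by running the tester construction with both components simultaneously: for $\de{G}\in\A^{\perp}$ and $\de{H}\in\B(\de{E}_1)^{\perp}$, the composite tester placing $\de{G}$ above $\sigma.1$ and $\de{H}$ above $\sigma.2$ lies in $((\Sigma x\in\A)\B(x))^{\perp}$ exactly because the interaction with any $\de{D}_{\de{A}',\de{B}'}$ factors into the two independent orthogonality checks, so the argument remains local and the dependency does not cause circular reasoning.
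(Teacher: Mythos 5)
Your template (daimon-freeness, then the two inclusions via explicit counter-designs) is genuinely different from the paper's proof, which is much shorter: it writes $\Sig$ as the union of the tensor products $w\de{A}\odot w'\de{B}$ and invokes Girard's theorem $|(E\odot F)^{\perp\perp}|=|E^{\perp\perp}|\odot|F^{\perp\perp}|$, so that everything reduces at once to the principality of $\A$ and of each $\B(\de{A})$. Unfortunately your version has a genuine gap, and it sits exactly at the point you yourself flag as the ``main obstacle'': the dependency of $\B$ on $x$.

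The composite tester you propose --- some $\de{G}\in\A^{\perp}$ above $\sigma.1$ and a fixed $\de{H}\in\B(\de{A})^{\perp}$ above $\sigma.2$ --- is never shown to belong to $(\Sig)^{\perp}$, and in general it does not. To be orthogonal to every $\de{D}_{\de{A}',\de{B}'}$ with $\de{A}'\in\A$ and $\de{B}'\in\B(\de{A}')$, whatever the tester runs above $\sigma.2$ must be orthogonal to every $\de{B}'\in\B(\de{A}')$ for every $\de{A}'$, i.e.\ it must lie in $\bigcap_{\de{A}'\in\A}\B(\de{A}')^{\perp}$; a fixed $\de{H}$ chosen from $\B(\de{A})^{\perp}$ for the single relevant $\de{A}$ gives no such guarantee, and that intersection may well contain nothing that rejects the particular truncation you are trying to kill. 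Your closing claim that ``the interaction factors into two independent orthogonality checks'' is precisely what fails in the dependent case: to reject a defect in the $\de{B}$-component while staying orthogonal to all of $\Sig$, the tester must first explore the $\sigma.1$-component far enough to identify which $\de{A}'$ it is facing and only then select an element of $\B(\de{A}')^{\perp}$ to play above $\sigma.2$ --- the two checks must be made dependent and sequenced, not independent. Such an adaptive tester is constructible in Ludics (positive actions may depend on the negative history), but it is a nontrivial construction that your sketch does not supply. The same issue infects your reverse inclusion, where you need testers in $(\Sig)^{\perp}$ witnessing $\de{E}_2\in\B(\de{E}_1)^{\perp\perp}$; note also that $\B(\de{E}_1)$ is only meaningful once you already know $\de{E}_1$ is an element of $\A$ (not merely of $\A^{\perp\perp}$, nor a proper $\da$-shortening), so the order of your deductions needs repair. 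The paper's route via the tensor theorem sidesteps all of this by handling each pair $(\de{A},\de{B})$ inside its own tensor product, where no cross-dependency ever arises.
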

 
 \begin{proof}
$\Sig$ is $\da$-free because $\A$, $\B$, $w$ and $w'$ are $\da$-free.
\\Now we prove $|(\Sig )^{\perp\perp}|= (\Sig )^{\da}  $. 
\\Remark that $\Sig = \bigcup_{\de{A} \in \A, \de{B} \in \B(\de{A})} \{ w \de{A} \odot w' \de{B}\} $. From \cite{LocSol} given two sets  of designs $E,F $ we have $|( E \odot F)^{\perp\perp}| = |E^{\perp\perp}| \odot  |F^{\perp\perp}| $. 
Then we get $  \bigcup_{\de{A} \in \A, \de{B} \in \B(\de{A})}    |  (  w \de{A} \odot  w' \de{B} )^{\perp\perp}     | =  \bigcup_{\de{A} \in \A , \de{B} \in \B(\de{A})}  | ( w \de{A})^{\perp\perp} | \odot  | ( w' \de{B})^{\perp\perp}  |   $. \\
For all $\de{A} \in \A$ and $\de{B} \in \B(\de{A})$, $w \de{A}$ and $w' \de{B}$ are principal, then $ | ( w \de{A})^{\perp\perp} | \odot  | ( w' \de{B})^{\perp\perp}  |  =  ( w \de{A})^{\da}$$ \odot ( w' \de{B})^{\da} $. 
\\Thus $| (\Sig)^{\perp\perp} |= (\Sig)^{\da}$.  
\end{proof}
So $\Sig$ represents the canonical terms of type $(\Sigma x \in A)B(x)$.

 \begin{example}
Let 
\scalebox{.9}
{$\pi_{1} = \shortstack{  $Id$ \\ $\sigma.1.1 \vdash \alpha$ \\\hrulefill\\ $\vdash \sigma.1, \sigma.2 , \alpha$  \\\hrulefill\\ $\sigma \vdash \alpha$}$, \hspace{2em} $\pi_{2} = \shortstack{  $Id$ \\ $\sigma.2.2 \vdash \alpha$ \\\hrulefill\\ $\vdash \sigma.1, \sigma.2 , \alpha$  \\\hrulefill\\ $\sigma \vdash \alpha$}$.} 
\\Then for all $\de{D}_{\de{A}, \de{B}} \in \Sig$, $\llbracket \pi_{1}, \de{D}_{\de{A}, \de{B}} \rrbracket= \de{A}$ and  $\llbracket \pi_{2}, \de{D}_{\de{A}, \de{B}}  \rrbracket = \de{B}$, i.e., $\pi_{1}$ and $\pi_{2}$ respectively represent the projection on the first and the second component. 
 \end{example}

 We define the function sum as follows. 
 \begin{example}
The sum.\\
 We represent a pair of natural numbers $(n,m)$ by 
$\de{E}_{(n,m)}= \quad \shortstack{ \shortstack{$\textbf{n}_{\sigma.1.1}$ \\\hrulefill\\ $\sigma.1 \vdash$} \hspace{1em} \shortstack{$\textbf{m}_{\sigma.2.2}$ \\\hrulefill\\ $\sigma.2 \vdash$} \shortstack{} \\\hrulefill\\$\vdash \sigma$}$. The function sum is then represented by $\de{S}_{+}$ 

\begin{center}
$\de{S}_{+} =      \shortstack{     $\de{G}_{0}$  \\  $\vdash \sigma.1, \sigma.2, \alpha$     \\\hrulefill\\ $\sigma \vdash \alpha$}  $, where $\de{G}_{i} = \shortstack{            \shortstack{ $\de{F}_{0}$\\ $\vdash \sigma.2, \alpha.\overline{i}$  }          \hspace{1em}      \shortstack{  $\de{G}_{i+1}$  \\$\vdash  \sigma.1.1 \overline{i} 0, \sigma.2, \alpha. \overline{i+1}$  \\\hrulefill\\\hrulefill\\ $\vdash \sigma.1.1 \overline{i} 0, \sigma.2, \alpha. \overline{i}$  }  \\\hrulefill\\  $\sigma.1.1 \overline{i} \vdash \alpha. \overline{i}, \sigma.2$  }$ and $\de{F}_{i}=   \shortstack{ \shortstack{  $\hrulefill_{\emptyset}$ \\ $\vdash \alpha.  \overline{i}$ }       \hspace{1em}   \shortstack{ $\de{F}_{i+1}$ \\ $\vdash \sigma.2.2  \overline{i} 0 , \alpha.  \overline{i+1}$  \\\hrulefill\\\hrulefill\\ $ \vdash \sigma.2.2  \overline{i} 0 , \alpha.  \overline{i}  $ }   \\\hrulefill\\ $\sigma.2.2 \overline{i} \vdash \alpha.  \overline{i}$  }$ $\forall i \in \mathbb{N}$. 
\end{center}
$\llbracket   \de{S}_+  , \de{E}_{(n,m)}   \rrbracket = \textbf{n\pmb{+}m}$. The intuition behind $\de{S}_{+}$ is the following: read $n$ and stock step by step $\textbf{n}$ on $\alpha$ and then do the same with $\textbf{m}$. In particular the design $\de{G}_i$ reads $n$, while $\de{F}_i$ reads $m$.  

\end{example}

 \subsection*{Conclusion and future work}
In this paper, we propose a representation of Martin-L\"of types in Ludics. We define for some simple types, dependent product type and $\Sigma$ type a set of designs representing their canonical terms and we proved that such a set is principal. Since Ludics is affine, this framework is quite restricted. However this is not a real problem as  we may apply our approach on extensions of Ludics that integrate exponentials \cite{Ludicswithrep}. We intend also to work with Computational Ludics, i.e., a reformulation of Ludics from a computational point of view, introduced by Terui \cite{TeruiComp}, which overcomes the linear framework. We proposed a representation of records in Computational Ludics (master thesis), then the continuation of this work would be to make explicit links between Computational Ludics and Type Theory with records \cite{Cooper}, concerning their relevance for Linguistics application. 
   
   \bibliography{biblio} 
\bibliographystyle{plain}

\end{document}